 \title{On two multistable extensions of stable L\'evy\\ motion and their semi-martingale representations}
 \author{Ronan Le Gu\'{e}vel$^*$, Jacques L\'evy V\'ehel$^{**}$ and Lining Liu$^{**}$}
 \date{
 }
 \newtheorem{theo}{Theorem}
 \newtheorem{prop}[theo]{Proposition}
 \newtheorem{lem}[theo]{Lemma}
 \newtheorem{cor}[theo]{Corollary}
\newcommand\bbbr{\mathbb{R}} % Real numbers
\newcommand\bbbn{\mathbb{N}} % Natural numbers
\renewcommand\P{\mathbb{P}}
 \newcommand\ed{\stackrel{d}{=}}
\newcommand\X{{\sf X}}
\newcommand\Y{{\sf Y}}
\newcommand{\1}{{\bf 1}}
\newcommand\E{\mathbb{E}}
\def\keywordname{{\bf Keywords:}}
\newcommand{\keywords}[1]{\par\addvspace\baselineskip\noindent\keywordname\enspace\ignorespaces#1}
\begin{document}
 
 \maketitle
 
\begin{abstract}
\par \noindent We study two versions of multistable L\'evy motion. 
Such processes are extensions of classical
L\'evy motion where the stability index is allowed to vary in time, a useful 
property for modelling non-increment stationary phenomena. We show that 
the two multistable L\'evy motions have distinct properties: in particular, one is a  
pure-jump Markov process, while the other one satisfies neither of these properties. We prove
that both are semi-martingales and provide semi-martingale decompositions. 
\end{abstract}

\keywords{L\'evy motion, multistable process, semi-martingale.}

\noindent {\bf 2010 Mathematics Subject Classification:} 60G44, 60G51, 60G52

\bigskip
\begin{footnotesize}
\noindent
$^*$Universit\'{e} de Rennes 2 - Haute Bretagne, Equipe de Statistique Irmar,
UMR CNRS 6625, Place du Recteur Henri Le Moal, CS 24307, 35043 RENNES Cedex, France

\noindent
$^{**}$Regularity Team, Inria and MAS Laboratory, Ecole Centrale Paris - Grande Voie des Vignes, 92295 Ch\^atenay-Malabry Cedex, France

\noindent
ronan.leguevel@univ-rennes2.fr, jacques.levy-vehel@inria.fr, liningliu.422@gmail.com
\end{footnotesize}

\section{Background on multistable L\'evy motions}

The class of {\it multistable processes} was introduced in \cite{FL} and
further developed and studied in \cite{ AA,BL,FLL,KL,LL,LL2}. These processes extend the well-known stable processes (see, {\it e.g.} \cite{Bk_Sam}) by letting
the stability index $\alpha$ evolve in ``time''. Such models are useful in various applications where the data display jumps with varying intensity, 
such as financial records, EEG or natural terrains:
indeed, multistability is one practical way to deal with  
non-stationarities observed in various real-world
phenomena. Generally speaking, non-stationarity is a "non-property", 
and as such, is hard to model directly. One approach to tackle 
this difficulty is to consider processes which are {\it localisable} \cite{F,F2}.
Recall that a process $Y =\{Y(t): t \in \bbbr\}$
is $h-$localisable at $u$ if there exists an $h \in \bbbr$
and a non-trivial limiting process $Y_{u}'$ such that
\begin{equation}
\lim_{r \to 0}\frac{Y(u+rt) -Y(u)}{r^{h}} = Y_{u}'(t).
\label{locform1}
\end{equation}
(Note $Y_{u}'$ will in general vary with $u$.) When the limit exists,
$Y_{u}'=\{Y_{u}'(t): t\in \bbbr\}$ is termed the {\it local form} or tangent
process of $Y$ at $u$.
The limit (\ref{locform1}) may
be taken either in finite dimensional distributions or distribution (one then speaks of strong localisability).
A classical example of a localisable process is multifractional Brownian motion
$Z$ which ``looks like'' index-$h(u)$ fractional Brownian motion (fBm) close to time $u$ but where $h(u)$ varies, that is
\begin{equation*}
\lim_{r \to 0}\frac{Z(u+rt) -Z(u)}{r^{h(u)}} = B_{h(u)}(t) \label{exfbm}
\end{equation*}
where $B_{h}$ is index-$h$ fBm.
A generalization of mBm, where the Gaussian
measure is replaced by an $\alpha$-stable one, leads to
multifractional stable processes \cite{ST1}.

The $h$-local form $Y_{u}'$ at $u$, if it exists,
must be $h$-self-similar, that is $Y_{u}'(rt)\ed r^{h}Y_{u}'(t)$
for $r>0$. In addition, as shown in \cite{F,F2}, under quite general
conditions, $Y_{u}'$ must also have stationary increments at almost all $u$ at
which there is strong localisability.  Thus, typical local forms
are self-similar with stationary increments (sssi), that is
$r^{-h}(Y_{u}'(u+rt) -Y_{u}'(u)) \ed Y_{u}'(t)$ for
all $u$ and $r>0$. Conversely, all sssi processes are localisable.
Classes of known sssi processes include fBm,
linear fractional stable motion and $\alpha$-stable L\'{e}vy
motion, see \cite{Bk_Sam}.

Localisability provides a convenient way to handle non stationarity: an everywhere localisable process
may heuristically be seen as a "gluing" of tangent "simple" increment-stationary processes, where simple, in this frame,
means sssi. 
Multistable processes are instances of this idea with the tangent processes being L\'{e}vy
stable ones, and the characteristic varying in time being the stability exponent $\alpha$. Although the tangent 
processes are semi-martingales in this case, there is no reason why the original process will share this property. The main
contribution of this work is to show that, for a simple class of multistables processes, this will indeed be the case.
As a consequence, such multistable processes are well fitted for applications e.g. in financial modelling or traffic engineering.

Three paths have been explored so far to define multistable processes: the first one uses a {\it field of stable processes} $X(t, \alpha)$, and obtains a multistable process by considering a ``diagonal'' $Y(t)=X(t,\alpha(t))$ on this field. 
This is the approach 
of \cite{FL,LL}. In \cite{FLL}, multistable processes are obtained from {\it moving average processes}. 
Finally, in \cite{KL}, {\it multistable measures} are used for this purpose. 

In this work, we shall be interested only in multistable L\'evy motions, which are the simplest examples of 
multistable processes.  These multistable extensions of $\alpha-$stable L\'evy motion were constructed in \cite{FL,LL} 
using respectively Poisson and Fergusson-Klass-LePage representations, while related processes were defined 
in \cite{KL} by their characteristic functions.
In order to give precise definitions, let us set the following notation, which will be used throughout the paper:
\begin{itemize}
	\item $\alpha: \bbbr \to [c,d] \subset (0,2)$ is a continuous function.
	\item $\Pi$ is a Poisson point process on $[0,T] \times \bbbr$, $T>0$, with mean measure the Lebesgue measure $\mathcal{L}$.
	\item $(\Gamma_i)_{i \geq 1}$ is a sequence of arrival times of a Poisson process with unit arrival time.
	\item $(V_i)_{i \geq 1}$ is a sequence of i.i.d. random variables with uniform distribution on $[0,T]$.
	\item $(\gamma_i)_{i \geq 1}$ is a sequence of i.i.d. random variables with distribution
	$P(\gamma_i=1)=P(\gamma_i=-1)=1/2$.
\end{itemize}
The three sequences $(\Gamma_i)_{i \geq 1}$, $(V_i)_{i \geq 1}$, and $(\gamma_i)_{i \geq 1}$ are independent.

\bigskip

The following process, that we shall call 
{\it field-based multistable L\'evy motion}, is considered in \cite{FL}:
\begin{equation} \label{FBP}
L_{F}(t)  = C_{ \alpha(t)}^{1/ \alpha(t)}\sum_{(\X,\Y)\in\Pi}
\1_{[0,t]}(\X)\Y^{<-1/\alpha(t)>} \quad (t \in [0,T]),
\end{equation}
where  $Y^{<-1/\alpha(t)>} := \mbox{sign}(Y)|Y|^{-1/\alpha(t)}$ and
\begin{equation}\label{calphax}
C_{u}= \left( \int_{0}^{\infty} x^{-u} \sin (x)dx \right)^{-1}.
\end{equation}
Note that, when $\alpha(t)$ equals the constant $\alpha$ for all $t$, $L_{F}$ is simply the Poisson representation of 
$\alpha-$stable L\'evy motion, that we denote by $L_{\alpha}$. If one considers
instead the Fergusson-Klass-LePage representation of L\'evy motion, then it is natural to define $L_{F}$ as follows:

\begin{equation} \label{FBFKL}
L_{F}(t)  =  C_{ \alpha(t)}^{1/ \alpha(t)} T^{1/ \alpha(t)}\sum_{i=1}^{+ \infty}
\gamma_i \Gamma_i^{-1/\alpha(t)} 1_{[0,t]}(V_i) \quad (t \in [0,T]).
\end{equation}

This is the approach of \cite{LL}, where it is proven that \eqref{FBP} and \eqref{FBFKL} indeed define the same process, and that 
the joint characteristic function of $L_{F}$ equals:

\begin{equation}\label{FBCF}
\E \left( \exp \left(i \sum_{j=1}^{m}\limits \theta_j L_{F}(t_j) \right) \right)= \exp \left( -2 \int_{[0,T]} \int_{0}^{+ \infty} \sin^2\left( \sum_{j=1}^{m} \theta_j \frac{C_{\alpha(t_j)}^{1/\alpha(t_j)}}{2y^{1/\alpha(t_j)}} 1_{[0,t_j]}(x) \right)\hspace{0.1cm} dy \hspace{0.1cm} dx \right)
\end{equation}
where $ m \in \mathbb{N}, (\theta_1, \ldots, \theta_m) \in \bbbr^m, (t_1, \ldots , t_m) \in \bbbr^m$.

\bigskip
In \cite{KL}, another path is followed to define a multistable extension of L\'evy motion.
Considering the characteristic function of $\alpha-$stable L\'evy motion $L$:
\begin{equation*}
\mathbb{E}\left(\exp(i\theta L(t))\right)= \exp \left(-t |\theta|^{\alpha} \right),
\end{equation*}
one defines the process $L_{I}$ by its joint characteristic function as follows:
\begin{equation}\label{IICF}
\mathbb{E}\left(\exp\left(i\sum_{j=1}^d\theta_j L_{I}(t_j)\right)\right)=\exp\left(-\int\left|\sum_{j=1}^d \theta_j1_{[0,t_j]}(s)\right|^{\alpha(s)} ds\right).
\end{equation}

Both $L_{F}$ and $L_{I}$ are tangent, at each time $u$, to a stable L\'evy motion with exponent $\alpha(u)$: 
\begin{equation*}
\frac{Y(u+rt) - Y(u)}{r^{\frac{1}{\alpha(u)}}} \to L_{\alpha(u)}(t) 
\end{equation*}
as $r \searrow 0$, where $Y=L_{F}$ or $Y=L_{I}$, and convergence is of finite dimensional
distributions. As discussed above, they thus provide flexible models to study phenomena 
which locally look like stable L\'evy motions, but where the stability index would evolve in time.

It is clear from this definition that $L_{I}$ has independent increments (this is why we call this version 
the {\it independent increments multistable L\'evy motion}). This is a strong difference with $L_{F}$, which is not even a 
Markov process. The construction based on multistable measures thus offers the advantage of retaining some 
properties of classical L\'evy motion: 
obviously the increments cannot be stationary any more, but we still deal 
with a pure jump additive process. This is not the case for the field-based L\'evy motion (which is not a pure jump
process, see Section 3). However, a drawback of \eqref{IICF} is that, while $L_{F}$ coincides, 
at each fixed time, with a L\'evy motion, this is not true of $L_{I}$.

The main aim of this work is to prove that both $L_{I}$ and $L_{F}$ are semi-martingales. This is almost obvious
of $L_{I}$, but far less so of $L_{F}$ (we prove in fact a more general statement, see Theorem \ref{semimgal}).
This property is of crucial importance for the very applications for which
multistable processes were designed in the first time, in particular financial modelling: in \cite{LCLV},
we use this property to analyse a "stochastic local jump intensity" model, 
much in the spirit of classical stochastic volatility models, but where the parameter $\alpha$ 
rather than the volatility evolves in a random way.

The remainder of this work is organized as follows: Section \ref{serrep} elucidates the links between $L_{F}$ and $L_{I}$ 
and provides series representations of $L_{I}$. We prove that both processes are semi-martingales and give their semi-martingales decompositions in Section \ref{semidec}.

\section{Series representations of independent increments multistable L\'{e}vy Motion}\label{serrep}

While three characterizations of $L_{F}$ are known, namely the Poisson and Fergusson-Klass-LePage representations and its characteristic function, 
only the latter is available for $L_{I}$. In this section, we provide the series representation of independent increments multistable L\'evy motion.

\subsection*{Poisson series representation}
\begin{prop}\label{propPRLII}
$L_{I}$ admits the following representation in law for $ t\in [0,T]$:
\begin{equation}\label{PRLII}
L_{I}(t)=\sum_{(X,Y)\in \Pi}C_{\alpha(X)}^{1/ \alpha(X)}\mathbf{1}_{[0,t]}(X)Y^{<-1/\alpha(X)>}.
\end{equation}
\end{prop}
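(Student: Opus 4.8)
The plan is to verify that the right-hand side of \eqref{PRLII}, call it $S(t)$, has exactly the joint characteristic function \eqref{IICF}; since \eqref{IICF} determines all finite-dimensional distributions, this establishes the stated identity in law. Fix $m\in\bbbn$, reals $\theta_1,\dots,\theta_m$ and times $t_1,\dots,t_m\in[0,T]$, and abbreviate $A(x):=\sum_{j=1}^m\theta_j\1_{[0,t_j]}(x)$. Then $\sum_{j}\theta_j S(t_j)=\sum_{(X,Y)\in\Pi} h(X,Y)$ with $h(x,y)=C_{\alpha(x)}^{1/\alpha(x)}\,A(x)\,y^{<-1/\alpha(x)>}$, so the object to compute is the characteristic functional of the Poisson point process $\Pi$ applied to $h$.

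First I would invoke the exponential (Campbell) formula for a Poisson point process with mean measure $\mathcal{L}$, namely $\E\!\left[\exp\!\left(i\sum_{(X,Y)\in\Pi}h(X,Y)\right)\right]=\exp\!\left(\int_0^T\!\int_\bbbr (e^{ih(x,y)}-1)\,dy\,dx\right)$. Because $y\mapsto y^{<-1/\alpha(x)>}$ is odd, $h(x,\cdot)$ is odd, so $\int_\bbbr\sin h(x,y)\,dy=0$ and only the real part survives: the exponent equals $\int_0^T\!\int_\bbbr(\cos h(x,y)-1)\,dy\,dx$. This integrand is even in $y$, bounded near $y=0$, and of order $|y|^{-2/\alpha(x)}$ as $|y|\to\infty$, hence integrable on $\bbbr$ since $\alpha(x)<2$; this is exactly what makes $S(t)$ well defined and legitimizes the formula above.

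The decisive simplification is that, at a fixed $x$, every term in the inner sum carries the \emph{same} index $\alpha(x)$ (the index is frozen at the jump location $X$, not at the observation times). Hence the inner integral is the one-stable computation with effective coefficient $A(x)$ and index $\alpha(x)$: the substitution $v=C_{\alpha(x)}^{1/\alpha(x)}|A(x)|\,y^{-1/\alpha(x)}$ (for $y>0$, doubling by evenness) reduces $\int_\bbbr(\cos h-1)\,dy$ to a multiple of $\int_0^\infty v^{-\alpha(x)}\sin v\,dv=C_{\alpha(x)}^{-1}$ appearing in \eqref{calphax}. Carrying out the elementary integration (using $\int_0^\infty(1-\cos v)v^{-\alpha-1}\,dv=\alpha^{-1}C_\alpha^{-1}$ after one integration by parts), the normalising constants $C_{\alpha(x)}$ cancel and the inner integral equals $-|A(x)|^{\alpha(x)}$, as demanded by \eqref{IICF}. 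Integrating over $x\in[0,T]$ then yields the exponent $-\int_0^T|A(x)|^{\alpha(x)}\,dx=-\int\bigl|\sum_j\theta_j\1_{[0,t_j]}(s)\bigr|^{\alpha(s)}\,ds$, which is precisely \eqref{IICF}.

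The routine integral identities are not the difficulty; the main obstacle is the rigorous meaning and convergence of the series in \eqref{PRLII}. The points of $\Pi$ with small $|Y|$ (the large jumps) are finite in number and harmless, but those with large $|Y|$ are infinite in number and, for $\alpha(x)\ge 1$, not absolutely summable, so the series converges only conditionally, through the cancellation provided by the symmetry of $Y^{<-1/\alpha(X)>}$. I would therefore define $S(t)$ as the limit of the truncated sums over $\{|Y|\le M\}$, compute the characteristic function of each truncation (a genuine Poisson integral over a set of finite $\mathcal{L}$-measure), and pass to the limit $M\to\infty$ by dominated convergence using the bound $|\cos h-1|\lesssim\min(1,|y|^{-2/\alpha(x)})$ established above; this simultaneously proves $S(t)$ converges in distribution and identifies its limiting characteristic function with \eqref{IICF}. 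It is worth stressing that this clean reduction is special to $L_I$: for $L_F$ the index inside the sum is $\alpha(t_j)$ rather than $\alpha(X)$, the indices no longer coincide at fixed $x$, and the inner integral cannot be collapsed to the single-index form, which is exactly why \eqref{FBCF} does not simplify to the $\int|\cdot|^{\alpha(s)}$ shape of \eqref{IICF}.
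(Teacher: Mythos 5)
Your proof is correct and is in substance the paper's own argument: both identify the law through the exponential (Campbell) formula for $\Pi$ together with the change of variables $v=C_{\alpha(x)}^{1/\alpha(x)}|A(x)|\,y^{-1/\alpha(x)}$, under which the normalising constant cancels and the exponent collapses to $-\int\bigl|\sum_j\theta_j\mathbf{1}_{[0,t_j]}(s)\bigr|^{\alpha(s)}\,ds$, precisely because the index is frozen at $\alpha(X)$. The only differences are organisational: you compute the full $m$-dimensional joint characteristic function in one stroke, whereas the paper computes only the one-dimensional marginal and then invokes independence of increments (immediate from the restriction property of $\Pi$) to recover the finite-dimensional laws; and you justify convergence of the series by truncation in $|Y|$ rather than by citing Proposition 4.2 of \cite{FL} --- for that route, note that convergence in distribution of the symmetric truncated sums upgrades to almost sure convergence by the It\^o--Nisio (L\'evy equivalence) theorem, which is what is needed for the right-hand side of \eqref{PRLII} to define a genuine random variable rather than merely a limit in law.
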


\begin{proof}
The proof is rather routine. Call $\tilde L$ the process on the right hand side of 
\eqref{PRLII}. Proposition 4.2 in \cite{FL} implies that the series converges, and that its marginal 
characteristic function reads:
\begin{equation*}\label{charcyt}
\mathbb{E}\left(\exp(i\theta \tilde L(t))\right)= \exp \left(-2\int \int \sin^{2}\left(\frac{1}{2}\theta C_{\alpha(s)}^{1/ \alpha(s)}\mathbf{1}_{[0,t]}(s)|y|^{-1/\alpha(s)}\right)dsdy\right),
\end{equation*}
(an integral sign without bounds means integration over the whole domain).
We first prove that this quantity is equal to $\exp \left(-\int_0^{t} |\theta|^{\alpha(s)} ds\right)$. One computes:
\begin{eqnarray*}
\nonumber\int 2 \sin^{2}\left(\frac{1}{2}\theta C_{\alpha(s)}^{1/ \alpha(s)}\mathbf{1}_{[0,t]}(s)|y|^{-1/\alpha(s)}\right)dy&=& \int \left(1-\cos (\theta C_{\alpha(s)}^{1/ \alpha(s)} \mathbf{1}_{[0,t]}(s)|y|^{-1/\alpha(s)})\right)dy\\
\nonumber &=& \alpha(s)|\theta|^{\alpha(s)} C_{\alpha(s)} \mathbf{1}_{[0,t]}(s) \int \frac{(1-\cos( z) )}{|z|^{\alpha(s)+1}}dz\\
%\nonumber &=& \alpha(s)\frac{\Gamma(1-\alpha(s))\cos(\frac{\pi}{2}\alpha(s))}{\alpha(s)}\left|\theta C_{\alpha(s)}^{1/ \alpha(s)} %\mathbf{1}_{[0,t]}(s)\right|^{\alpha(s)}\\
%\nonumber &=&\Gamma(1-\alpha(s))\cos(\frac{\pi}{2}\alpha(s))\left|\theta\mathbf{1}_{[0,t]}(s)\right|^{\alpha(s)}C_{\alpha(s)}\\
\label{integrand} &=& |\theta\mathbf{1}_{[0,t]}(s)|^{\alpha(s)},
\end{eqnarray*}
where we have used the change of variables $z=\theta C_{\alpha(s)}^{1/ \alpha(s)} \mathbf{1}_{[0,t]}(s)y^{-1/\alpha(s)}$ 
and the fact that 
$$\alpha(s) \int \frac{(1-\cos( z) )}{|z|^{\alpha(s)+1}}dz = \frac{1}{C_{\alpha(s)}}.$$
Thus,
\begin{eqnarray*}
\mathbb{E}\left(\exp(i\theta \tilde L(t))\right)&=& \exp \left(-\int \int 2\sin^{2}\left(\frac{1}{2}\theta C_{\alpha(s)}^{1/ \alpha(s)}\mathbf{1}_{[0,t]}(s)y^{-1/\alpha(s)}\right)dsdy\right)\\
&=& \exp \left(-\int |\theta\mathbf{1}_{[0,t]}(s)|^{\alpha(s)} ds\right)\\
&=&\exp \left(-\int_0^t|\theta|^{\alpha(s)} ds\right).
\end{eqnarray*}

The process defined by the series on the right hand side of \eqref{PRLII} clearly has independent increments. This fact and the above computation
ensures that it coincides in law with $L_{I}$.
\end{proof}

\subsection*{Fergusson-Klass-Le Page series representation}
To simplify the notation, we set $T=1$ in the next proposition.

\begin{prop}\label{propFKLLII}
$L_{I}$ admits the following representation in law:
\begin{equation}\label{FKLLII}
L_{I}(t)=\sum_{i=1}^{\infty} C_{\alpha(V_i)}^{1/ \alpha(V_i)} \gamma_i \Gamma_i^{-1/\alpha(V_i)}\mathbf{1}_{(V_i \leq t)}.
\end{equation}
In particular, the marginal characteristic function of the right hand side $Y$ 
of \eqref{FKLLII} is
\begin{equation}\label{mcfii}
\mathbb{E}\left(\exp(i\theta Y(t))\right)= \exp \left(-\int_0^{t} |\theta|^{\alpha(u)} du\right).
\end{equation}
\end{prop}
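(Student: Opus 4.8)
The plan is to recognise the right-hand side of \eqref{FKLLII} as an integral against a marked Poisson point process and then read off its (joint) characteristic function from Campbell's exponential formula, exactly as in the proof of Proposition \ref{propPRLII}. First I would note that, by the marking theorem, the collection $\{(V_i,\Gamma_i,\gamma_i)\}_{i\ge1}$ is a Poisson random measure $N$ on $[0,1]\times[0,\infty)\times\{-1,1\}$ with mean measure $\mu(dv\,dr\,ds)=dv\,dr\,\rho(ds)$, where $\rho=\frac12(\delta_{-1}+\delta_{1})$: indeed $\{\Gamma_i\}$ is the unit-rate Poisson process on $[0,\infty)$, while $(V_i,\gamma_i)$ are the associated i.i.d.\ marks, independent of $\{\Gamma_i\}$. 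With this identification, $Y(t)=\int f_t\,dN$ for $f_t(v,r,s)=C_{\alpha(v)}^{1/\alpha(v)}\,s\,r^{-1/\alpha(v)}\mathbf{1}_{(v\le t)}$, and more generally $\sum_{j=1}^m\theta_jY(t_j)=\int g\,dN$ with $g(v,r,s)=C_{\alpha(v)}^{1/\alpha(v)}\,s\,r^{-1/\alpha(v)}\Theta(v)$ and $\Theta(v)=\sum_{j=1}^m\theta_j\mathbf{1}_{(v\le t_j)}$.

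Next I would apply Campbell's formula $\E[\exp(i\int g\,dN)]=\exp(\int(e^{ig}-1)\,d\mu)$ and carry out the $s$-summation first: averaging $e^{\pm ia}$ against $\rho$ and subtracting $1$ turns the complex exponential into $\cos a-1$, so the integrand becomes $\cos(C_{\alpha(v)}^{1/\alpha(v)}\Theta(v)r^{-1/\alpha(v)})-1$, which is where the symmetry of the jumps is used. The radial integral $\int_0^\infty(1-\cos(C_{\alpha(v)}^{1/\alpha(v)}\Theta(v)r^{-1/\alpha(v)}))\,dr$ is then evaluated by the change of variables $z=C_{\alpha(v)}^{1/\alpha(v)}\Theta(v)r^{-1/\alpha(v)}$ and the defining relation for $C_{\alpha(v)}$ employed in the proof of Proposition \ref{propPRLII}; the normalising constant cancels and the inner integral equals $|\Theta(v)|^{\alpha(v)}$. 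Integrating over $v\in[0,1]$ and using $\mathbf{1}_{(v\le t_j)}=\mathbf{1}_{[0,t_j]}(v)$ yields $\E[\exp(i\sum_j\theta_jY(t_j))]=\exp(-\int_0^1|\sum_j\theta_j\mathbf{1}_{[0,t_j]}(v)|^{\alpha(v)}\,dv)$, which is precisely the defining characteristic function \eqref{IICF} of $L_I$ (with $T=1$); the case $m=1$ gives \eqref{mcfii}. Since all finite-dimensional characteristic functions coincide, $Y\ed L_I$.

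Alternatively, and more in the spirit of the proof of Proposition \ref{propPRLII}, one may compute only the marginal ($m=1$) characteristic function as above and then observe that $Y$ has independent increments: $Y(t)-Y(s)$ depends on $N$ only through the points with $v\in(s,t]$, and a Poisson random measure assigns independent masses to disjoint $v$-strips; matching marginals then forces $Y\ed L_I$. The point that needs genuine care — rather than the routine trigonometric manipulation — is the a.s.\ convergence of the series and the integrability that legitimises Campbell's formula, and this is where I expect the real work to lie. Because $\alpha(V_i)\in[c,d]\subset(0,2)$, the terms are of order $\Gamma_i^{-1/\alpha(V_i)}\lesssim\Gamma_i^{-1/d}\sim i^{-1/d}$ with $\sum_i i^{-2/d}<\infty$, so the series of independent symmetric terms converges a.s.; this is the analogue for $L_I$ of the convergence statement quoted from Proposition 4.2 of \cite{FL} in the proof of Proposition \ref{propPRLII}, and I would invoke that same circle of estimates to justify exchanging sum, expectation and integral.
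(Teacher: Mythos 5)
Your argument is correct, and your primary route differs slightly but genuinely from the paper's. The paper computes only one-dimensional objects: it applies the Marking Theorem to get the marginal characteristic function \eqref{mcfii}, then repeats the same computation for an increment $Y(t)-Y(s)$ and concludes by independence of increments (which, as you rightly note in your alternative, really comes from the fact that disjoint $v$-strips of the Poisson measure are independent --- the paper's closing sentence ``this implies independence of the increments'' is a slight shortcut there). You instead feed the full linear combination $\sum_j\theta_j Y(t_j)$ into Campbell's exponential formula and recover the defining joint characteristic function \eqref{IICF} in one stroke; the trigonometric core (averaging over $\gamma=\pm1$ to produce $1-\cos$, the substitution $z=C_{\alpha(v)}^{1/\alpha(v)}\Theta(v)r^{-1/\alpha(v)}$, and the normalisation $\alpha\int(1-\cos z)|z|^{-\alpha-1}dz=C_\alpha^{-1}$) is identical to the paper's. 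Your approach buys a cleaner logical structure --- no separate independence argument is needed since all finite-dimensional distributions are matched directly --- at the cost of one slightly heavier display; the paper's buys shorter formulas but must invoke the independent-increments structure to upgrade marginals to the full law. Your closing remarks on a.s.\ convergence ($\Gamma_i\sim i$, terms $O(i^{-1/d})$ with $2/d>1$, three-series/Kolmogorov for symmetric terms) address a point the paper handles only implicitly by analogy with Proposition 4.2 of \cite{FL}, and are a welcome addition; just note that the bound $\Gamma_i^{-1/\alpha(V_i)}\lesssim\Gamma_i^{-1/d}$ requires $\Gamma_i\geq 1$, which excludes only finitely many terms and so does not affect the conclusion.
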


\begin{proof}
Again, the proof is simple.
Let us first verify that the marginal characteristic function of the right hand 
side $Y$ of \eqref{FKLLII} is indeed given by \eqref{mcfii}.
Set 
$$f(X, m_X)=C_{\alpha(U)}^{1/ \alpha(U)} \gamma X^{-1/\alpha(U)}\mathbf{1}_{(U\leq t)}$$
where $m_X=(\gamma, U)$. Using the Marking Theorem (see \cite[p. 55]{JK}), since $\mathbb{P}(\gamma=1)=\mathbb{P}(\gamma=-1)=1/2$, one computes: 
\begin{eqnarray}
\nonumber \mathbb{E}\left(\exp(-i\theta Y(t))\right)&=& \exp \left( \int \int (1- e^{-f(x,m)})dx \mathbb{P}(x,dm)\right)\\
\nonumber &=&\exp \left( \int \int (1- e^{-i\theta C_{\alpha(u)}^{1/ \alpha(u)} \gamma x^{-1/\alpha(u)}\mathbf{1}_{(u\leq t)}})dx \mathbb{P}(x,dm)\right)\\
\nonumber&=& \exp\left( -\frac{1}{2} \int_0^{\infty}\int_0^{\infty}\left(\left(1-e^{i\theta x^{-1/\alpha(u)}\mathbf{1}_{(u\leq t)}}\right)+\left(1-e^{-i\theta x^{-1/\alpha(u)}\mathbf{1}_{(u\leq t)}}\right)\right)dxdu\right)\\
\nonumber&=& \exp\left(-\int_0^{\infty}\int_0^{\infty}\left(1-\frac{e^{i\theta x^{-1/\alpha(u)}\mathbf{1}_{(u\leq t)}}+e^{-i\theta x^{-1/\alpha(u)}\mathbf{1}_{(u\leq t)}}}{2}\right)dxdu\right)\\
\label{previouslemma}&=& \exp\left(-\int_0^{\infty}\int_0^{\infty}(1-\cos (\theta C_{\alpha(u)}^{1/ \alpha(u)} \mathbf{1}_{(u\leq t)}x^{-1/\alpha(u)}))dxdu\right)\\
\label{serieschac}&=& \exp \left(-\int_0^{t} |\theta|^{\alpha(u)} du\right)
\end{eqnarray}
where the passage from (\ref{previouslemma}) to (\ref{serieschac}) follows along the same lines as in the proof of Proposition \ref{propPRLII}.
Now,
\begin{eqnarray*}
\hspace{-1.8cm}
\mathbb{E}\left(\exp(-i\theta \left(Y(t)-Y(s)\right))\right)&=&\exp \left( \int \int (1- e^{-i\theta C_{\alpha(u)}^{1/ \alpha(u)} \gamma x^{-1/\alpha(u)}\mathbf{1}_{(s<u\leq t)}})dx \mathbb{P}(x,dm)\right)\\
&=& \exp\left( -\frac{1}{2} \int_0^{\infty}\int_0^{\infty}\left(\left(1-e^{i\theta x^{-1/\alpha(u)}\mathbf{1}_{(s<u\leq t)}}\right)+\left(1-e^{-i\theta x^{-1/\alpha(u)}\mathbf{1}_{(s<u\leq t)}}\right)\right)dxdu\right)\\
&=& \exp\left(-\int_0^{\infty}\int_0^{\infty}\left(1-\frac{e^{i\theta x^{-1/\alpha(u)}\mathbf{1}_{(s<u\leq t)}}+e^{-i\theta x^{-1/\alpha(u)}\mathbf{1}_{(s<u\leq t)}}}{2}\right)dxdu\right)\\
&=& \exp\left(-\int_0^{\infty}\int_0^{\infty}(1-\cos (\theta C_{\alpha(u)}^{1/ \alpha(u)} \mathbf{1}_{(s<u\leq t)}x^{-1/\alpha(u)}))dxdu\right)\\
&=& \exp \left(-\int_0^{\infty} | \mathbf{1}_{(s,t]}(u)\theta|^{\alpha(u)} du\right)\\
&=& \exp \left(-\int_s^{t} |\theta|^{\alpha(u)} du\right).
\end{eqnarray*}
This implies independence of the increments and ends the proof.
\end{proof}

\section{Multistable L\'evy motions are semi-martingales}\label{semidec}
\subsection{Case of independent increments multistable L\'evy motion}

Let us first recall the following result (Theorem 4.14 in chapter II of \cite{JS}):

\begin{theo}\label{jacshy}
Let X be a process with independent increments. 
Then X is also a semimartingale if and only if, for each real $u$, 
the function $t \mapsto \E\left(\exp(i u X_t) \right)$ has finite variation over finite 
intervals.
\end{theo}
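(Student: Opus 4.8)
The plan is to rely on the canonical description of processes with independent increments (PII) through their characteristic functions, together with the characterisation of semimartingales by their characteristics, both developed in Chapter II of \cite{JS}. For a PII $X$, write $G_t(u):=\E(e^{iuX_t})$. When $X$ is stochastically continuous this function admits a L\'evy--Khintchine exponent
$$
\log G_t(u)=iuB_t-\frac{u^2}{2}C_t+\int_0^t\!\!\int_{\bbbr}\bigl(e^{iux}-1-iux\1_{\{|x|\le 1\}}\bigr)\,\nu(ds,dx),
$$
with $B$ deterministic, $C$ deterministic and nondecreasing, and $\nu$ a deterministic measure satisfying $\int(|x|^2\wedge 1)\,\nu<\infty$; in the general case one must add, for each fixed time of discontinuity, a factor equal to the characteristic function of the corresponding independent jump. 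The whole proof consists in translating between analytic properties of $t\mapsto G_t(u)$ and the structure of the triplet $(B,C,\nu)$ and of the fixed jumps.

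For the direct implication I would assume that $X$ is a semimartingale. Because the increments are independent, the semimartingale characteristics of $X$ are deterministic, so $B$ has finite variation on compacts, $C$ is nondecreasing, $\nu$ integrates $|x|^2\wedge 1$, and the fixed jumps are summable in the appropriate sense. Each of the three terms of the exponent above is then of finite variation in $t$ on bounded intervals, and the fixed-jump factors form a convergent product of bounded variation; hence $t\mapsto\log G_t(u)$, and therefore $t\mapsto G_t(u)$, has finite variation over finite intervals.

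For the converse I would start from the hypothesis that $t\mapsto G_t(u)$ has finite variation for every real $u$. The continuous part governed by $C$ and the compensated-jump part governed by $\nu$ always contribute a semimartingale, and, for each fixed $u$, the corresponding terms of the exponent are automatically of finite variation in $t$; the only possible obstruction is therefore the drift $B$ and the accumulation of fixed jumps. The key observation is that, $C$- and $\nu$-parts being of finite variation, the drift $B$ can be isolated as the remaining term of $\log G_\cdot(u)$, so its finite variation is inherited from that of $t\mapsto\log G_t(u)$; simultaneously, finite variation of $t\mapsto G_t(u)$ forces the sizes of the fixed jumps to be summable. Assembling $B$, $C$, $\nu$ and the fixed jumps then yields the canonical decomposition of $X$ into a finite-variation drift, a continuous local martingale and a compensated-jump integral, and the converse half of the characterisation in \cite{JS} applies.

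The hard part will be this converse, and in particular the treatment of a PII that is \emph{not} stochastically continuous. One must disentangle $t\mapsto G_t(u)$ into its continuous-in-$t$ part and the countable family of fixed-time factors, and check that finite variation of the product is inherited by each factor without allowing a pathological accumulation of fixed jumps; one must also recover the drift $B$ from $G$ without appealing to moments of $X_t$, which need not exist in the stable regime of interest here. This is precisely where the full apparatus of \cite{JS} is required, and why the statement, though classical, is not elementary.
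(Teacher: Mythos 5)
First, be aware that the paper does not prove this statement at all: it is recalled verbatim as Theorem~4.14 of Chapter~II of \cite{JS}, so there is no internal proof to compare yours against. Judged on its own terms, your outline follows the classical route through the characteristic triplet and is a reasonable roadmap, but the steps you defer are exactly the ones that carry the theorem. Two concrete gaps. First, in the converse direction, controlling the exponent is not enough: showing that $B$ has finite variation, that $C$ is nondecreasing and that $\nu$ integrates $|x|^2\wedge 1$ only constrains the \emph{law} of $X$; to conclude that $X$ is a semimartingale you must exhibit an actual pathwise decomposition of $X$ into a finite-variation drift, a continuous martingale and a compensated jump part, i.e.\ a L\'evy--It\^o decomposition for a general (not necessarily quasi-left-continuous) process with independent increments. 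That is the substance of the result and is not supplied by ``assembling $B$, $C$, $\nu$ and the fixed jumps.'' Second, writing $\log G_t(u)$ presupposes that $G_t(u)$ never vanishes and that a continuous finite-variation branch of the logarithm can be chosen; for a process with fixed times of discontinuity $G_t(u)$ can vanish, and this is precisely where the argument must be localized rather than waved at.

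It is also worth knowing that \cite{JS} prove sufficiency by a different and cleaner device: on any interval where $G_t(u)\neq 0$, the process $e^{iuX_t}/G_t(u)$ is a complex martingale, so $e^{iuX_t}$ is the product of a martingale and a deterministic finite-variation function, hence a semimartingale; a separate lemma then upgrades ``$e^{iuX}$ is a semimartingale for every real $u$'' to ``$X$ is a semimartingale.'' This sidesteps the reconstruction of the triplet entirely, and the triplet computation is only needed for the (easier) necessity direction, where one uses the fact that the characteristics of a semimartingale with independent increments are deterministic. If you intend to complete your sketch, these are the points to supply.
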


\begin{prop}
$L_{I}$ is a semi-martingale. 
\end{prop}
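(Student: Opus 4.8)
The plan is to invoke Theorem~\ref{jacshy}, so the first thing to check is that $L_{I}$ is a process with independent increments. This is immediate from its very definition through the characteristic function \eqref{IICF}, and it is also what the series representations of the previous section exhibit directly (see the closing remarks in the proofs of Propositions~\ref{propPRLII} and~\ref{propFKLLII}). With this hypothesis in place, Theorem~\ref{jacshy} reduces the semi-martingale property to a single analytic condition: for every real $\theta$ (the Fourier argument, written $u$ in Theorem~\ref{jacshy}), the map $\phi_\theta : t \mapsto \E\left(\exp(i\theta L_{I}(t))\right)$ must have finite variation over finite intervals.

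Next I would write $\phi_\theta$ explicitly. Taking $m=1$ in \eqref{IICF}, or equivalently by \eqref{mcfii}, one has $\phi_\theta(t) = \exp\left(-\int_0^t |\theta|^{\alpha(s)}\,ds\right)$ for $t\in[0,T]$. The crucial observation is then that the inner function $g_\theta(t) := \int_0^t |\theta|^{\alpha(s)}\,ds$ is \emph{nondecreasing} in $t$, simply because its integrand $s \mapsto |\theta|^{\alpha(s)}$ is nonnegative. Composing the monotone function $g_\theta$ with the decreasing map $x \mapsto e^{-x}$ yields that $\phi_\theta$ is itself monotone (nonincreasing), and a monotone function is automatically of finite variation on every finite interval, with total variation equal to its total decrease. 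This verifies the condition of Theorem~\ref{jacshy} for each $\theta$ and concludes the argument.

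I expect there to be essentially no genuine obstacle here, which matches the remark in the introduction that the semi-martingale property of $L_{I}$ is ``almost obvious''. The only point deserving a moment's attention is that the argument relies on having the characteristic function in usable closed form; this is precisely what \eqref{mcfii} furnishes, and it is where all the real work (carried out in Section~\ref{serrep}) has already been spent. One could additionally note, although it is not needed for finite variation, that since $\alpha$ is continuous with values in the compact set $[c,d]\subset(0,2)$, the function $g_\theta$ is in fact absolutely continuous with continuous derivative $|\theta|^{\alpha(t)}$, so that $\phi_\theta$ is $C^1$; but monotonicity alone already gives finite variation, so no such refinement is required.
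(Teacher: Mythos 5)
Your proposal is correct and follows exactly the paper's route: apply Theorem~\ref{jacshy} using the independence of increments, and verify finite variation of $t \mapsto \E(\exp(i\theta L_I(t)))$ from the explicit formula \eqref{mcfii}. You merely spell out the monotonicity argument that the paper dismisses as ``obvious'', which is a welcome clarification but not a different proof.
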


\begin{proof}
Let us check that the conditions of Theorem \ref{jacshy} are fulfilled. 
Since $L_{I}$ is an independent increments process, it suffices to verify 
that the function
$t \mapsto \mathbb{E}\left(\exp(i\theta Y(t))\right)$ has finite variations on finite intervals for each $\theta$. This is obvious in view of \eqref{mcfii}.
\end{proof}

It is easy to check that the system of generating triplets (in the notation of \cite{Bk_Sato}) of $L_{I}$ is
$(0,\nu,0)$, with $\nu(dx,dz)= |z|^{-\alpha(x)-1}dzdx$. If $\alpha$ ranges in $[c,d] \subset(0,1)$, 
then $L_{I}$ is a finite variation process, while if $\alpha$ ranges in  
$[c,d] \subset (1,2)$, then $\int_0^1\int_1^{\infty}|z|\nu(dx,dz)=\int_0^1\int_1^{\infty}|z|^{-\alpha(x)}dxdz <\infty$, which implies that $L_{I}$ is a martingale. The proof is a simple adaptation of the one for stable processes and is left to the
reader. In general, the following decomposition holds:
\begin{prop}\label{semimartingale}
\begin{equation*}
L_{I}(t) = A(t) + M(t),
\end{equation*}
where 
$$A(t) = \sum_{(X,Y)\in \Pi, |Y| < 1}C_{\alpha(X)}^{1/ \alpha(X)}\mathbf{1}_{[0,t]}(X)Y^{<-1/\alpha(X)>}$$
is a finite variation process and
$$M(t) = \sum_{(X,Y)\in \Pi, |Y| \geq 1}C_{\alpha(X)}^{1/ \alpha(X)}\mathbf{1}_{[0,t]}(X)Y^{<-1/\alpha(X)>}$$
is a martingale.
\end{prop}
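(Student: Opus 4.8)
The plan is to verify separately that $A$ is a finite-variation process, that $M$ is a square-integrable martingale, and that $A+M$ reconstitutes the Poisson representation \eqref{PRLII} of Proposition \ref{propPRLII}. The key observation is that the map $Y\mapsto Y^{<-1/\alpha(X)>}$ reverses small and large values: the condition $|Y|<1$ actually singles out the \emph{large} jumps of $L_{I}$ (there $|Y|^{-1/\alpha(X)}>1$), but these points lie in the strip $[0,T]\times\{|Y|<1\}$, whose Lebesgue measure is the finite number $2T$. Hence $\Pi$ has only finitely many points in that strip almost surely, so $A$ is a right-continuous, piecewise-constant pure-jump process with finitely many jumps on $[0,T]$; its total variation equals the finite sum of the moduli of those jumps, and $A$ is therefore of finite variation. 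This part is immediate.

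For $M$ I would first establish square-integrability through the $L^2$-isometry for Poisson integrals. Because the intensity $dY\,dX$ is invariant under $Y\mapsto -Y$ while $Y^{<-1/\alpha(X)>}$ is odd in $Y$, the compensator $\int_{|Y|\ge 1}C_{\alpha(X)}^{1/\alpha(X)}\1_{[0,t]}(X)\,Y^{<-1/\alpha(X)>}\,dY\,dX$ vanishes, so $M$ is centered and coincides with the compensated Poisson integral over $[0,t]\times\{|Y|\ge 1\}$. Its variance is $\E\big(M(t)^2\big)=\int_0^t\int_{|Y|\ge 1}C_{\alpha(X)}^{2/\alpha(X)}|Y|^{-2/\alpha(X)}\,dY\,dX$, and the inner integral $2\int_1^\infty y^{-2/\alpha(X)}\,dy$ is finite precisely because $\alpha(X)\le d<2$ forces the exponent $2/\alpha(X)>1$. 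Thus $\E\big(M(t)^2\big)<\infty$ for every $t\in[0,T]$, and in particular $M(t)\in L^2\subset L^1$.

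Finally I would deduce the martingale property. Since $A$ and $M$ are built from the disjoint regions $\{|Y|<1\}$ and $\{|Y|\ge 1\}$ of the same Poisson process, $M$ inherits independent increments, and the increment $M(t)-M(s)$ is a Poisson integral over $(s,t]\times\{|Y|\ge 1\}$, again centered by the same $Y\mapsto -Y$ symmetry. With respect to the natural filtration $\mathcal{F}_s=\sigma\big(\Pi\cap([0,s]\times\bbbr)\big)$ one then has $\E\big(M(t)-M(s)\mid\mathcal{F}_s\big)=\E\big(M(t)-M(s)\big)=0$, which is the martingale property; adding the two pieces recovers \eqref{PRLII}.

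The main obstacle is that when $\alpha$ exceeds $1$ the large-jump sum $M$ is not absolutely convergent, since $\int_{|Y|\ge 1}|Y|^{-1/\alpha(X)}\,dY=\infty$ as soon as $\alpha(X)\ge 1$. One therefore cannot manipulate the defining series term by term, and the real work lies in interpreting $M$ as a principal-value (equivalently, $L^2$-compensated) limit and in checking that the symmetry of the intensity makes its compensator vanish, so that the \emph{uncompensated} symmetric sum is a bona fide centered $L^2$ martingale. The finite-variation claim for $A$, by contrast, needs nothing beyond the finiteness of the measure of the strip $\{|Y|<1\}$.
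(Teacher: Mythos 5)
Your proof is correct. For the finite--variation part it is identical to the paper's: both arguments rest on the fact that the strip $[0,T]\times\{|y|<1\}$ has finite Lebesgue measure $2T$, so $\Pi$ has almost surely finitely many points there and $A$ is a finite sum of jumps. For the martingale part the two proofs exploit the same two facts --- the jumps of $M$ are bounded by $K=\sup_{b\in[c,d]}C_b^{1/b}$ because $|Y|\ge 1$ forces $|Y|^{-1/\alpha(X)}\le 1$, and the intensity is symmetric in $Y$ --- but reach integrability by different routes. The paper shows only that $M(t)\in L^1$: the L\'evy measure of $M$ restricted to $\{|z|\ge 1\}$ is supported in $[1,K]$, so $\int_0^t\int_{|z|\ge1}|z|\,\nu_M(dx,dz)<\infty$, and Sato's Theorem 25.3 yields a finite mean; symmetry and independence of increments (left implicit there) then give the martingale property. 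You instead invoke the $L^2$-isometry for compensated Poisson integrals and compute $\E\big(M(t)^2\big)=\int_0^t\int_{|y|\ge1}C_{\alpha(x)}^{2/\alpha(x)}|y|^{-2/\alpha(x)}\,dy\,dx<\infty$, using $2/\alpha(x)>1$. Your route buys a stronger conclusion ($M$ is square-integrable with an explicit variance) and makes explicit a point the paper glosses over: since $\int_{|y|\ge1}|y|^{-1/\alpha}\,dy=\infty$ when $\alpha\ge1$, the series defining $M$ converges only conditionally and must be read as the $L^2$ (equivalently symmetric principal-value) limit of compensated integrals over $\{1\le|y|\le n\}$, whose compensators vanish by the oddness of $y\mapsto y^{<-1/\alpha(x)>}$; this identification of the symmetric sum with the compensated integral is the one step you should write out in full if you expand the sketch. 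Both arguments are sound; the paper's is shorter because it delegates integrability to a cited moment theorem, while yours is more self-contained.
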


\begin{proof}

That $A$ has finite variation is a direct consequence of the fact that it is almost surely composed of a finite number of (jump) terms.

\noindent To prove that $M$ is a martingale, it is sufficient to show that it is in $L_1(\Omega)$ for all $t$. Note that the jumps of $M$ are bounded by $K=\sup_{b\in[c,d]} C_{b}^{1/ b}$. As a consequence, 
with obvious notation, 
$$\int_0^t\int_1^{\infty}|z|\nu_M(dx,dz) \leq \int_0^t\int_1^{K}|z|\nu(dx,dz) = 
2\int_0^t\int_1^{K}|z|^{-\alpha(x)}dx \ dz < \infty.$$
This implies that, for all $t$, $M(t)$ has finite mean (see, {\it e.g.} \cite[Theorem 25.3]{Bk_Sato}).

\end{proof}
{\bf Remarks}
\begin{itemize}
\item A corresponding decomposition holds of course for the Fergusson-Klass-LePage representation, {\it i.e.}:
$$L_{I}(t)=A'(t)+M'(t),$$
where 
$$A'(t)=\sum_{i=1, \Gamma_i < 1 }^{\infty} C_{\alpha(U_i)}^{1/ \alpha(U_i)} \gamma_i \Gamma_i^{-1/\alpha(U_i)}\mathbf{1}_{(U_i \leq t)}$$ 
has finite variation and 
$$M'(t)=\sum_{i=1, \Gamma_i \geq 1 }^{\infty} C_{\alpha(U_i)}^{1/ \alpha(U_i)} \gamma_i \Gamma_i^{-1/\alpha(U_i)}\mathbf{1}_{(U_i \leq t)} $$ 
is a martingale. The notation $\sum\limits_{i=1, \Gamma_i < 1 }^{\infty}$
means that we sum over the set of indices $\{ i \in \mathbb{N}: \Gamma_i < 1 \}$.
\item As is well-known, the decomposition above is not unique. Another decomposition of interest is the following:

$$L_{I}(t)=M_1(t)+A_1(t)$$
with
$$M_1(t)=\sum_{i=1, \alpha(U_i)>\frac{1}{i} }^{\infty} C_{\alpha(U_i)}^{1/ \alpha(U_i)} \gamma_i \Gamma_i^{-1/\alpha(U_i)}\mathbf{1}_{(U_i \leq t)}$$
is a martingale and 
$$A_1(t)=\sum_{i=1, \alpha(U_i)<\frac{1}{i}}^{\infty} C_{\alpha(U_i)}^{1/ \alpha(U_i)} \gamma_i \Gamma_i^{-1/\alpha(U_i)}\mathbf{1}_{(U_i \leq t)}$$
is an adapted process with finite variations.
\end{itemize}

\subsection{Case of field-based multistable L\'evy motion}
$L_{F}$ is not an independent increments process, and, contrarily to what its definition might suggest 
at first sight, it is not a pure jump process (see below for a more precise statement). 
Thus it is not immediately obvious that it is indeed a semi-martingale. We shall use the characterization 
of semi-martingales as  ``good integrators'' (see, {\it e.g.}
\cite{Bichteler,Protter}) to prove this fact. More precisely, fix $t > 0$ 
and consider simple predictable processes $\xi$ of the form
$$\xi(u) = \xi_0 \mathbf{1}_{\{0\} } (u) + \sum_{k=1}^{n}\limits \xi_k \mathbf{1}_{(s_k, t_k ]} (u)$$
where $0 \leq s_1 < t_1 \leq s_2 < t_2 \leq ... \leq s_n < t_n=t$, $\xi_k \in \mathcal{F}_{s_k}$ and $|\xi_k| \leq 1$ a.s. for all $0 \leq k \leq n$. The integral of $\xi$ with respect to a process $Y$ is:
$$I_Y(\xi)  = \sum_{k=1}^n\limits \xi_k (Y_{t_k} - Y_{s_k} ).$$
It is well known that $Y$ is a semi-martingale if and only if the family $\{ I_Y(\xi), |\xi| \leq 1, \xi \textrm{ is a simple predictable process} \}$ is bounded in probability.

In fact, we shall establish the semi-martingale property for a more general class of processes 
defined through Ferguson-Klass-LePage representations as follows:
\begin{equation}\label{FKL}
X(t)=C^{1/\alpha(t)}_{\alpha(t)} \sum_{i=1}^{\infty} \gamma_i \Gamma_i^{-1/\alpha(t)} f(t,V_i),
\end{equation}
where the function $f$ is such that, for all $t$, $\int_0^T |f(t,x)|^{\alpha(t)} \ dx < \infty$, and give conditions on the kernel $f$ ensuring that $X$ is a semi-martingale. 

\begin{theo}\label{semimgal}
Let $X$ be defined by \eqref{FKL}, with $\alpha$ a $C^1$ function. Assume that $X$ is a c\`adl\`ag adapted process, and that there exists a constant $L_{\infty}$ such that for all $(t,x)  \in \mathbb{R} \times [0,T]$, $|f(t,x)| \leq L_{\infty}$. 
Assume in addition that, for all $x\in E$, the function $u \mapsto f(u,x)$ has finite variation 
on finite intervals, with total variation $Vf(.,x)$ verifying
\begin{equation}\label{intvar}
\int_0^T \left| Vf(.,x) \right|^p dx <+\infty
\end{equation}
for some $p \in (d,2)$.
Then $X$ is a semi-martingale.

\end{theo}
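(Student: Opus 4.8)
The plan is to rely on the ``good integrator'' characterisation recalled just above the statement: it suffices to show that the family $\{I_X(\xi)\}$, indexed by simple predictable $\xi$ with $|\xi|\le 1$, is bounded in probability. Writing $g(u)=C_{\alpha(u)}^{1/\alpha(u)}$ and $h_i(u)=g(u)\Gamma_i^{-1/\alpha(u)}f(u,V_i)$, so that $X(u)=\sum_{i\ge1}\gamma_i h_i(u)$, and using that the intervals $(s_k,t_k]$ are disjoint together with $|\xi|\le1$, I would first rewrite
$$I_X(\xi)=\sum_{i\ge 1}\gamma_i\,\phi_i(\xi),\qquad \phi_i(\xi)=\int_{(0,t]}\xi(u)\,dh_i(u),\qquad |\phi_i(\xi)|\le V(h_i),$$
where $V(h_i)$ is the total variation of $h_i$ on $[0,t]$. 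Since $g$ and $u\mapsto\Gamma_i^{-1/\alpha(u)}$ are $C^1$ (because $\alpha\in C^1$), while $u\mapsto f(u,V_i)$ has finite variation $Vf(\cdot,V_i)$, the product rule for the variation of a product gives, for $\Gamma_i\ge1$,
$$V(h_i)\le c\,\Gamma_i^{-1/d}\bigl(1+|\ln\Gamma_i|\bigr)\bigl(1+Vf(\cdot,V_i)\bigr),$$
the factor $\Gamma_i^{-1/d}$ arising from $\sup_u\Gamma_i^{-1/\alpha(u)}\le\Gamma_i^{-1/d}$ and $c$ depending only on $\alpha$, $t$ and $L_{\infty}$.

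I would then split the series at $\Gamma_i=1$. The indices with $\Gamma_i\le1$ form an almost surely finite set, so $\bigl|\sum_{\Gamma_i\le1}\gamma_i\phi_i(\xi)\bigr|\le\sum_{\Gamma_i\le1}V(h_i)=:R$ is finite and independent of $\xi$, giving a finite-variation contribution that is bounded in probability. The whole difficulty is the small-jump sum $Z(\xi)=\sum_{\Gamma_i>1}\gamma_i\phi_i(\xi)$, for which I would prove the \emph{uniform} moment bound $\sup_\xi\E|Z(\xi)|^p<\infty$ for the exponent $p\in(d,2)$ of hypothesis \eqref{intvar}; boundedness in probability then follows from Chebyshev's inequality. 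The pivotal estimate, obtained from Campbell's formula applied to the marked Poisson process $(\Gamma_i,V_i)$ (of intensity $\tfrac1T\,d\gamma\,dx$), is
$$\E\Bigl[\sum_{\Gamma_i>1}V(h_i)^p\Bigr]\le c'\int_1^\infty\gamma^{-p/d}(1+\ln\gamma)^p\,d\gamma\;\Bigl(1+\int_0^T Vf(\cdot,x)^p\,dx\Bigr)<\infty,$$
which is finite \emph{precisely} because $p>d$ makes the $\gamma$-integral converge and because of \eqref{intvar}; this is where both assumptions on the exponents enter. When $p\le1$ this already finishes the small jumps: subadditivity of $s\mapsto s^p$ gives $|Z(\xi)|^p\le\sum_{\Gamma_i>1}V(h_i)^p$ pathwise, which is integrable, so no cancellation of the signs is needed.

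The case $1<p<2$ is where the signs must cancel, and here lies the main obstacle: $\xi$ is adapted to the filtration of $X$ and hence depends on \emph{all} the signs $\gamma_i$, so $\phi_i(\xi)$ and $\gamma_i$ are not independent and a von Bahr--Esseen inequality cannot be applied directly. I would circumvent this by organising the increments $D_k=X_{t_k}-X_{s_k}$ along the deterministic grid $s_1<t_1\le\cdots<t_n=t$ and splitting $D_k=\mu_k+\Delta_k$ with $\mu_k=\E[D_k\mid\mathcal F_{s_k}]$. The products $\xi_k\Delta_k$ then form a martingale-difference sequence for $(\mathcal F_{s_k})$, so the martingale $L^p$-inequality combined with the \emph{conditional} von Bahr--Esseen inequality (given $\sigma(\Gamma_i,V_i)$ the signs are genuinely independent and symmetric) yields
$$\E\Bigl|\sum_k\xi_k\Delta_k\Bigr|^p\le C_p\sum_k\E|D_k|^p\le C_p'\,\E\Bigl[\sum_{\Gamma_i>1}V(h_i)^p\Bigr],$$
using $\sum_k|h_i(t_k)-h_i(s_k)|^p\le V(h_i)^p$ for $p\ge1$.

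The predictable remainder $\sum_k\xi_k\mu_k$ is exactly the finite-variation ``drift'' that distinguishes $L_{F}$ from the pure-jump martingale $L_{I}$, and is produced by the time-dependence of $\alpha(u)$, past jumps keeping on evolving. Rather than estimating it term by term (which would reintroduce the divergent $\int_1^\infty\gamma^{-1/d}\,d\gamma$), I would identify it, through the integration by parts $d(G_i f)=f_-\,dG_i+G_i\,df$ with $G_i=g\,\Gamma_i^{-1/\alpha(\cdot)}\in C^1$, with the absolutely continuous process $A(t)=\int_0^t\bigl(\sum_i\gamma_i f(u^-,V_i)G_i'(u)\bigr)\,du$ into which the conditional means accumulate. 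Because this integrand involves $f$ only through its \emph{bounded} values, the sign cancellation gives
$$\E\Bigl|\sum_{\Gamma_i>1}\gamma_i\, f(u^-,V_i)\,G_i'(u)\Bigr|^2=\sum_{\Gamma_i>1} f(u^-,V_i)^2\,G_i'(u)^2,$$
whose expectation is governed by $\int_1^\infty\gamma^{-2/\alpha(u)}(\ln\gamma)^2\,d\gamma<\infty$ (this is where $\alpha\le d<2$ is used), so that $\E\,V(A)_t<\infty$ and $A$ has finite variation; hence $|I_A(\xi)|\le V(A)_t$ is bounded in probability uniformly in $\xi$. Since these estimates are carried out after enlarging the filtration to contain the Poisson data, a final appeal to Stricker's theorem returns the semi-martingale property to the original filtration to which $X$ is adapted, which completes the argument.
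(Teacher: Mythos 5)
Your route is genuinely different from the paper's. The paper also starts from the good-integrator criterion, but it never splits at $\Gamma_i=1$ and never introduces a compensator: after discarding the $i=1$ term it decomposes $I_Y(\xi)=A+B$, where $A$ collects the increments $\Gamma_i^{-1/\alpha(t_k)}-\Gamma_i^{-1/\alpha(s_k)}$ (controlled via the mean value theorem and $\alpha\in C^1$) and $B$ collects $f(t_k,V_i)-f(s_k,V_i)$ (controlled via $Vf(\cdot,V_i)$ and \eqref{intvar}); both terms are then estimated by Markov's inequality at exponent $p$ and the von Bahr--Esseen inequality applied to the sum over $i$, the resulting series converging because $p>d$. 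Your big-jump/small-jump split together with the Campbell-formula bound $\E\bigl[\sum_{\Gamma_i>1}V(h_i)^p\bigr]<\infty$ reaches the same key estimate more cleanly (and avoids needing negative moments of the small $\Gamma_i$, which the paper's bound implicitly uses for $i=2$); for $p\le 1$ your argument is essentially complete. You also put your finger on a genuine subtlety that the paper passes over in silence, namely that $\xi_k$ is $\mathcal{F}_{s_k}$-measurable and therefore not independent of the signs $(\gamma_i)$.

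However, your treatment of the case $1<p<2$ has a concrete gap exactly where you try to repair that subtlety. The martingale half is fine: $(\xi_k\Delta_k)_k$ is a martingale-difference array and your chain of inequalities bounds $\E\bigl|\sum_k\xi_k\Delta_k\bigr|^p$ by $\E\bigl[\sum_i V(h_i)^p\bigr]$. But the identification of the predictable part $\sum_k\xi_k\mu_k$ with the increments of the absolutely continuous process $A$ is asserted, not proved. By your own integration by parts, $\mu_k$ splits into $\E\bigl[\int_{(s_k,t_k]}\sum_i\gamma_i f(u^-,V_i)G_i'(u)\,du\mid\mathcal F_{s_k}\bigr]$ plus $\E\bigl[\sum_i\gamma_i\int_{(s_k,t_k]}G_i(u)\,d_uf(u,V_i)\mid\mathcal F_{s_k}\bigr]$, and for the second term to vanish you need $\E[\gamma_i\mid\mathcal F_{s_k}]=0$ for every index whose kernel still moves on $(s_k,t_k]$. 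For $L_{F}$ this is harmless, since there $d_uf(u,V_i)=\delta_{V_i}(du)$ charges $(s_k,t_k]$ only for signs not yet revealed at time $s_k$; but for a general finite-variation kernel $f$ the measure $d_uf(\cdot,V_i)$ can charge $(s_k,t_k]$ for indices whose sign is already (partially) known to $\mathcal F_{s_k}$, and then this contribution survives inside $\mu_k$ and is not absolutely continuous. Bounding it crudely by $\sum_k\sum_i V(h_i)$ requires a first moment, i.e. $\int_1^\infty\gamma^{-1/d}d\gamma<\infty$, which fails for $d\ge 1$; so the gap is not cosmetic. You would need either to prove that $X^{>1}-A$ is a martingale in your enlarged filtration for the general $f$ of the theorem, or to handle the residual $G_i\,d_uf$ part with its own sign-cancellation argument. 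As written, your plan establishes the theorem for $L_{F}$-type kernels but not in the stated generality.
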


\begin{proof}
We shall show that $\frac{X(t)}{C^{1/\alpha(t)}_{\alpha(t)}}$ is a semi-martingale. Both processes
$t \mapsto \gamma_1 \Gamma_1^{-1/ \alpha(t)}$ and $t \mapsto f(t,V_1)$ have finite variation, thus the
same holds for $t \mapsto \gamma_1 \Gamma_1^{-1/ \alpha(t)} f(t,V_1)$. 
We shall prove that $Y(t)=\sum_{i=2}^{\infty}\limits \gamma_i \Gamma_i^{-1/\alpha(t)} f(t,V_i)$ is a semi-martingale.

One computes 
\begin{eqnarray*}
 I_Y(\xi) & = & \sum_{k=1}^n\limits \xi_k (Y_{t_k} - Y_{s_k} )\\
  & = &  \sum_{k=1}^n\limits \xi_k \sum_{i=2}^{+\infty}\limits \gamma_i \left( \Gamma_i^{-1/ \alpha(t_k)} f(t_k,V_i) - \Gamma_i^{-1/ \alpha(s_k)} f(s_k,V_i)\right) \\
  & = & \sum_{k=1}^n\limits \xi_k \sum_{i=2}^{+\infty}\limits \gamma_i \left( \Gamma_i^{-1/ \alpha(t_k)}  - \Gamma_i^{-1/ \alpha(s_k)} \right) f(t_k,V_i)\\
  & & + \sum_{k=1}^n\limits \xi_k \sum_{i=2}^{+\infty}\limits \gamma_i \Gamma_i^{-1/ \alpha(s_k)} \left( f(t_k,V_i) - f(s_k,V_i)\right)\\
  & =: & A+B.\\
\end{eqnarray*}
Let $K > 0$. We need to control $P(|I_Y(\xi)| >K)$. 

\noindent For the term $A$, we shall use the mean value theorem: there exists a sequence of random variables $w_i^k \in (s_k,t_k)$ such that

$$\Gamma_i^{-1/ \alpha(t_k)}  - \Gamma_i^{-1/ \alpha(s_k)} = (t_k - s_k) \frac{\alpha'(w_i^k)}{\alpha^2 (w_i^k)} (\log \Gamma_i) \Gamma_i^{-1 / \alpha(w_i^k)}.$$

\noindent One has 

\begin{eqnarray*}
 & & P\left( \left|\sum_{k=1}^n\limits \xi_k \sum_{i=2}^{+\infty}\limits \gamma_i \left( \Gamma_i^{-1/ \alpha(t_k)}  - \Gamma_i^{-1/ \alpha(s_k)} \right) f(t_k,V_i) \right| >K \right) \\
 &= & P\left( \left|\sum_{i=2}^{+\infty}\limits \gamma_i \sum_{k=1}^n\limits \xi_k \left( (t_k - s_k) \frac{\alpha'(w_i^k)}{\alpha^2 (w_i^k)} (\log \Gamma_i) \Gamma_i^{-1 / \alpha(w_i^k)} \right) f(t_k,V_i) \right|^p >K^p \right) \\
&  \leq & \frac{1}{K^p} E \left[ \left|\sum_{i=2}^{+\infty}\limits \gamma_i \sum_{k=1}^n\limits \xi_k \left( (t_k - s_k) \frac{\alpha'(w_i^k)}{\alpha^2 (w_i^k)} (\log \Gamma_i) \Gamma_i^{-1 / \alpha(w_i^k)} \right) f(t_k,V_i) \right|^p \right].\\
 \end{eqnarray*}
As in the proof of Proposition 4.9 of \cite{LL2}, we use Theorem 2 of \cite{BE} (note that random variables $w_i^k$ are independent of the 
$(\gamma_j)_j$). Since $p$<2,

\begin{eqnarray*}
 & & P\left( \left|\sum_{k=1}^n\limits \xi_k \sum_{i=2}^{+\infty}\limits \gamma_i \left( \Gamma_i^{-1/ \alpha(t_k)}  - \Gamma_i^{-1/ \alpha(s_k)} \right) f(t_k,V_i) \right| >K \right) \\
 & \leq & \frac{2}{K^p} \sum_{i=2}^{+\infty}\limits E \left[ \left|\gamma_i \sum_{k=1}^n\limits \xi_k \left( (t_k - s_k) \frac{\alpha'(w_i^k)}{\alpha^2 (w_i^k)} (\log \Gamma_i) \Gamma_i^{-1 / \alpha(w_i^k)} \right) f(t_k,V_i) \right|^p \right] \\
% & \leq & \frac{2}{K^p} \sum_{i=2}^{+\infty}\limits E \left[ \left| \sum_{k=1}^n\limits (t_k - s_k) \frac{\sup_{s \in [ 0,t ]} |\alpha'(s)|}{c^2} (\log \Gamma_i) ( \Gamma_i^{-1 / c} +\Gamma_i^{-1 / d} ) L_{\infty} \right|^p \right] \\
  & \leq & \frac{2}{K^p} \sum_{i=2}^{+\infty}\limits E \left[\left( \sum_{k=1}^n\limits (t_k - s_k) \frac{\sup_{s \in [ 0,t ]} |\alpha'(s)|}{c^2} \left|\log \Gamma_i\right| ( \Gamma_i^{-1 / c} +\Gamma_i^{-1 / d} ) L_{\infty} \right)^p \right] \\
% & \leq & \frac{2 L_{\infty}^p t^p \sup_{s \in [ 0,t ]} |\alpha'(s)|}{K^p c^2} \sum_{i=2}^{+\infty}\limits E \left[ \left|\log \Gamma_i ( \Gamma_i^{-1 / c} +\Gamma_i^{-1 / d} )\right|^p \right] .\\
  & \leq & 2 \left(\frac{L_{\infty} t \sup_{s \in [ 0,t ]} |\alpha'(s)|}{K c^2}\right)^p \ \sum_{i=2}^{+\infty}\limits E \left[ \left|\log \Gamma_i ( \Gamma_i^{-1 / c} +\Gamma_i^{-1 / d} )\right|^p \right],\\
 \end{eqnarray*}
where the infinite sum in the last line above converges since $p>d$.
Let us now consider the second term $B$ of $I_Y(\xi)$ :
\begin{eqnarray*}
 & & P\left( \left|\sum_{k=1}^n\limits \xi_k \sum_{i=2}^{+\infty}\limits \gamma_i \Gamma_i^{-1/ \alpha(s_k)} \left( f(t_k,V_i) - f(s_k,V_i)\right) \right| >K \right) \\
& \leq & \frac{1}{K^p} E \left[ \left|\sum_{i=2}^{+\infty}\limits \gamma_i  \sum_{k=1}^n\limits \xi_k \Gamma_i^{-1/ \alpha(s_k)} \left( f(t_k,V_i) - f(s_k,V_i) \right) \right|^p \right]. \\
\end{eqnarray*}
We use again Theorem 2 of \cite{BE} :
\begin{eqnarray*}
 & & P\left( \left|\sum_{k=1}^n\limits \xi_k \sum_{i=2}^{+\infty}\limits \gamma_i \Gamma_i^{-1/ \alpha(s_k)} \left( f(t_k,V_i) - f(s_k,V_i)\right) \right| >K \right) \\
& \leq & \frac{2}{K^p} \sum_{i=2}^{+\infty}\limits E \left[ \left|\sum_{k=1}^n\limits \xi_k \Gamma_i^{-1/ \alpha(s_k)} \left( f(t_k,V_i) - f(s_k,V_i) \right) \right|^p \right] \\
& \leq & \frac{2}{K^p} \sum_{i=2}^{+\infty}\limits E \left[ \left( \sum_{k=1}^n\limits \left( \Gamma_i^{-1/ c} + \Gamma_i^{-1/d}\right) \left| f(t_k,V_i) - f(s_k,V_i) \right| \right)^p \right] \\
& \leq & \frac{2}{K^p} \sum_{i=2}^{+\infty}\limits E \left[ \left( \Gamma_i^{-1/ c} + \Gamma_i^{-1/d}\right)^p \left( \sum_{k=1}^n\limits \left| f(t_k,V_i) - f(s_k,V_i) \right| \right)^p \right] \\
& \leq & \frac{2}{K^p} \sum_{i=2}^{+\infty}\limits E \left[ \left( \Gamma_i^{-1/ c} + \Gamma_i^{-1/d}\right)^p |Vf(.,V_i)|^p \right]\\
& = & \frac{2}{K^p} \sum_{i=2}^{+\infty}\limits E \left[ \left( \Gamma_i^{-1/ c} + \Gamma_i^{-1/d}\right)^p \right] E \left[|Vf(.,V_i)|^p \right]\\
& = & \frac{2}{K^p} E \left[|Vf(.,V_1)|^p \right] \sum_{i=2}^{+\infty}\limits E \left[ \left( \Gamma_i^{-1/ c} + \Gamma_i^{-1/d} \right)^p \right],\\
\end{eqnarray*}
where, in the last line above, the first expectation is finite by assumption \eqref{intvar} and the infinite sum converges converges because $p>d$. 

We have thus shown that $P(|I_Y(\xi)| >K) \leq \frac{C}{K^p}$ for some constant $C$, as required.
\end{proof}

\begin{cor}
Assume $\alpha$ is a $C^1$ function. Then the field-based multistable L\'evy motion is a semi-martingale.
\end{cor}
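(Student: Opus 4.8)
The plan is to recognize the Fergusson--Klass--LePage representation \eqref{FBFKL} of $L_F$ as an instance of the general form \eqref{FKL} and then invoke Theorem \ref{semimgal}. Comparing \eqref{FBFKL} with \eqref{FKL}, the prefactor $C_{\alpha(t)}^{1/\alpha(t)}$ already matches, so I would absorb the remaining factor $T^{1/\alpha(t)}$ into the kernel and set
\[
f(t,x) = T^{1/\alpha(t)}\,\mathbf{1}_{[0,t]}(x), \qquad (t,x) \in [0,T] \times [0,T].
\]
With this choice one has $L_F(t) = C_{\alpha(t)}^{1/\alpha(t)} \sum_{i \geq 1} \gamma_i \Gamma_i^{-1/\alpha(t)} f(t,V_i)$, which is exactly the form \eqref{FKL}. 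It then remains to verify that this $f$ satisfies the hypotheses of Theorem \ref{semimgal}.

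The integrability and boundedness requirements are immediate. Since $|f(t,x)|^{\alpha(t)} = T\,\mathbf{1}_{[0,t]}(x)$, we get $\int_0^T |f(t,x)|^{\alpha(t)}\,dx = Tt < \infty$ for every $t \in [0,T]$. For the uniform bound, as $\alpha$ takes values in the compact interval $[c,d] \subset (0,2)$ bounded away from $0$, the factor $T^{1/\alpha(t)}$ is bounded by $L_\infty := \max(T^{1/c}, T^{1/d})$, whence $|f(t,x)| \leq L_\infty$ for all $(t,x)$.

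The only point that needs genuine attention is the finite-variation condition \eqref{intvar}, and this is where the $C^1$ assumption on $\alpha$ enters. Fixing $x \in [0,T]$ and looking at $u \mapsto f(u,x) = T^{1/\alpha(u)}\mathbf{1}_{\{u \geq x\}}$, I would observe that it vanishes on $[0,x)$, has a single jump of size $T^{1/\alpha(x)}$ at $u = x$, and coincides on $[x,T]$ with the $C^1$ map $u \mapsto T^{1/\alpha(u)}$ (the $C^1$ regularity follows from $\alpha \in C^1$ and $\alpha \geq c > 0$). Its total variation over $[0,T]$ is therefore
\[
Vf(\cdot,x) = T^{1/\alpha(x)} + \int_x^T T^{1/\alpha(u)}\,|\ln T|\,\frac{|\alpha'(u)|}{\alpha(u)^2}\,du,
\]
which is finite and, crucially, bounded by a constant $M$ independent of $x$ (using $\alpha \in [c,d]$ and $\sup_{[0,T]}|\alpha'| < \infty$). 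Hence $\int_0^T |Vf(\cdot,x)|^p\,dx \leq T M^p < \infty$ for every $p$, and in particular for some $p \in (d,2)$, which exists because $d < 2$.

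Finally, $L_F$ is c\`adl\`ag (its jumps occur precisely as $t$ crosses the points $V_i$) and adapted to the natural filtration, so every hypothesis of Theorem \ref{semimgal} is met and the semi-martingale property of $L_F$ follows immediately. The main obstacle is thus nothing more than the finite-variation bookkeeping in the time variable: the kernel is discontinuous in $t$, but its single jump contributes only a bounded amount, and the $C^1$ hypothesis controls the smooth part; the genuinely delicate work, namely the control of the random series via the moment inequality of \cite{BE}, has already been carried out in the proof of Theorem \ref{semimgal}.
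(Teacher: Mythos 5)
Your proposal is correct and is exactly the argument the paper intends: the corollary is stated without proof as a direct application of Theorem \ref{semimgal} to the Fergusson--Klass--LePage representation \eqref{FBFKL}, with the kernel $f(t,x)=T^{1/\alpha(t)}\mathbf{1}_{[0,t]}(x)$. Your verification of the boundedness, integrability and finite-variation hypotheses (single jump of size $T^{1/\alpha(x)}$ plus a $C^1$ part controlled by $\sup|\alpha'|$, uniformly in $x$) is precisely the bookkeeping the authors leave implicit.
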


The result above does not give a semi-martingale decomposition of $L_{F}$. 
The following theorem does so. In addition, it elucidates the links between the two multistable L\'evy motions:

\begin{theo}
Assume $\alpha$ is a $C^1$ function. Almost surely, for all $t$,
\begin{equation}\label{decomp}
L_{F}(t) = A(t) + L_{I}(t),
\end{equation}
where $A$ is the finite variation process defined by:
\begin{equation}
A(t)=\int_0^t \sum_{i=1}^{+\infty} \gamma_i \frac{d\left(C_{\alpha(u)}^{1/\alpha(u)} \Gamma_i^{-1/\alpha(u)}\right)}{du}(s) \mathbf{1}_{[0,s[}(V_i) ds.
\end{equation}
\end{theo}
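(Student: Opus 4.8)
The plan is to read \eqref{decomp} as a pathwise identity on the common probability space carrying $(\gamma_i)$, $(\Gamma_i)$, $(V_i)$, on which both Fergusson-Klass-LePage series \eqref{FBFKL} and \eqref{FKLLII} are built from the same randomness (with $T=1$). Writing $g_i(u):=C_{\alpha(u)}^{1/\alpha(u)}\Gamma_i^{-1/\alpha(u)}$, we have $L_F(t)=\sum_{V_i\le t}\gamma_i g_i(t)$ and $L_I(t)=\sum_{V_i\le t}\gamma_i g_i(V_i)$, so that
\[
L_F(t)-L_I(t)=\sum_{V_i\le t}\gamma_i\bigl(g_i(t)-g_i(V_i)\bigr).
\]
Because $\alpha$ is $C^1$ and $u\mapsto C_{\alpha(u)}^{1/\alpha(u)}$ is smooth on $(0,2)$, each $g_i$ is $C^1$, and the fundamental theorem of calculus yields, for $V_i\le t$, the identity $g_i(t)-g_i(V_i)=\int_0^t g_i'(s)\,\mathbf{1}_{[0,s[}(V_i)\,ds$ (the terms with $V_i=t$ vanish, so there is no boundary ambiguity). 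Summing over $i$ and interchanging summation and integration would produce precisely $A(t)$.

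The main obstacle is to justify this interchange, since the series $\sum_i\gamma_i g_i'(s)$ converges only conditionally: as $g_i'(s)$ behaves like $\Gamma_i^{-1/\alpha(s)}\log\Gamma_i\sim i^{-1/\alpha(s)}\log i$ and $\alpha$ may exceed $1$, the absolute series diverges and convergence rests on the cancellations produced by the symmetric signs $\gamma_i$. I would condition on $(\Gamma_i,V_i)_{i\ge1}$ and exploit the orthogonality of the centred $\gamma_i$. With $h_N(s):=\sum_{i=1}^N\gamma_i g_i'(s)\mathbf{1}_{[0,s[}(V_i)$ and $h(s):=\lim_N h_N(s)$, the finite-$N$ version of the identity above gives $\sum_{i=1}^N\gamma_i(g_i(t)-g_i(V_i))\mathbf{1}_{[0,t]}(V_i)=\int_0^t h_N(s)\,ds$, while
\[
\E\Bigl[\int_0^t\bigl(h(s)-h_N(s)\bigr)^2\,ds\,\Big|\,(\Gamma_i,V_i)_i\Bigr]=\int_0^t\sum_{i>N}\bigl(g_i'(s)\bigr)^2\mathbf{1}_{[0,s[}(V_i)\,ds\longrightarrow0,
\]
the convergence holding because $\sum_i\int_0^t(g_i'(s))^2\,ds<\infty$ almost surely: indeed $(g_i'(s))^2\sim i^{-2/\alpha(s)}(\log i)^2$ with $2/\alpha(s)\ge 2/d>1$ (here $d<2$ is essential), and the finitely many small indices are harmless since for them $g_i'$ is continuous, hence bounded, on $[0,t]$. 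Thus $\int_0^t h_N(s)\,ds\to A(t)=\int_0^t h(s)\,ds$ in conditional $L^2$, hence almost surely along a subsequence; as the left-hand side of the finite-$N$ identity tends almost surely to $L_F(t)-L_I(t)$ (both LePage series converge a.s.), the limits agree and \eqref{decomp} holds for each fixed $t$. Alternatively one may invoke the von Bahr-Esseen inequality with some $p\in(d,2)$, as in Theorem \ref{semimgal}, after setting aside the initial indices whose $p$-th moments are infinite.

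It then remains to record the finite-variation property and to upgrade the identity to hold simultaneously for all $t$. The same $L^2$ bound gives $\int_0^t h(s)^2\,ds<\infty$ almost surely, whence $\int_0^t|h(s)|\,ds<\infty$ by Cauchy-Schwarz; therefore $A=\int_0^\cdot h(s)\,ds$ is absolutely continuous and has finite variation $\int_0^t|h(s)|\,ds$ on each $[0,t]$. Finally, $L_F$ and $L_I$ are c\`adl\`ag (both being semi-martingales, $L_F$ by the preceding corollary) and $A$ is continuous, so both sides of \eqref{decomp} are right-continuous; agreeing almost surely on the countable set $\mathbb{Q}\cap[0,T]$, they therefore agree for all $t$ almost surely. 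As a consistency check, at each $V_i$ the jumps of $L_F$ and of $L_I$ both equal $\gamma_i g_i(V_i)$ and so cancel in $L_F-L_I$, in agreement with the continuity of $A$.
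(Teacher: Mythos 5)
Your proof is correct, but it replaces the paper's central technical device by a genuinely different and much lighter one. Both arguments share the same skeleton: write $g_i(u)=C_{\alpha(u)}^{1/\alpha(u)}\Gamma_i^{-1/\alpha(u)}$, apply the fundamental theorem of calculus to each $g_i$ to get the finite-$N$ identity $\sum_{i\le N}\gamma_i\bigl(g_i(t)-g_i(V_i)\bigr)\mathbf{1}_{[0,t]}(V_i)=\int_0^t h_N(s)\,ds$, and then justify passing $N\to\infty$ inside the integral. The paper does this by proving Lemma \ref{LemUnifConv}, an almost-sure \emph{uniform-in-$s$} convergence of the partial sums $\sum_{i\le N}\gamma_i U_i^{(k)}\Gamma_i^{-1/\alpha(s)}\mathbf{1}_{[0,s[}(V_i)$ for $U_i^{(1)}=1$ and $U_i^{(2)}=\ln\Gamma_i$; that lemma occupies most of the proof and runs through a comparison with the deterministic-index series, dyadic blocks, a discretization of $\alpha$, the Ledoux--Talagrand maximal inequality and Borel--Cantelli, and it yields the interchange for all $t$ at once. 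You instead condition on $(\Gamma_i,V_i)_i$ and use the orthogonality of the signs $\gamma_i$ together with $\sum_i\sup_s|g_i'(s)|^2\lesssim\sum_i(\log i)^2 i^{-2/d}<\infty$ (valid since $d<2$) to get convergence of the integrand in conditional $L^2(ds\times d\P)$; this is enough for the interchange at fixed $t$, and you recover ``for all $t$'' from the right-continuity of $L_F-L_I$ and the continuity of $A$. Your route is substantially shorter and also delivers the finite-variation claim cleanly via $h\in L^2\subset L^1$ on compacts; its only extra input is that the series versions of $L_F$ and $L_I$ are c\`adl\`ag, which the paper already assumes for $L_F$ in Theorem \ref{semimgal} and which for $L_I$ is a standard (if unstated, in the paper as well) L\'evy--It\^o/It\^o--Nisio fact about sums of independent symmetric jump terms. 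What the paper's heavier lemma buys in exchange is a strictly stronger conclusion --- uniform convergence of the series defining the integrand of $A$ --- which is of independent use but is not needed for the decomposition itself once c\`adl\`ag versions are in hand.
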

From a heuristic point of view, this result states that, at all jumps points, both L\'evy multistable motions vary by the same amount. ``In-between''
jumps, however, $L_{I}$ does not change (it is a pure jump process), while $L_{F}$ moves in a continuous fashion (and thus is not a pure jump process).

 \begin{proof}
% 
% For the sake of notational simplicity, let:
% $$b(t)=C_{\alpha(t)}^{1/\alpha(t)},$$
% $$g_i(t) = C_{\alpha(t)}^{1/\alpha(t)} \Gamma_i^{-1/\alpha(t)} = b(t)\Gamma_i^{-1/\alpha(t)},$$
% $$d_i(t) = g_i'(t) = b'(t)\Gamma_i^{-1/\alpha(t)}+\frac{\alpha'(t)b(t)}{\alpha^2(t)}  \Gamma_i^{-1/\alpha(t)} \ln \Gamma_i,$$
% $$e_i(t) = b'(t) i^{-1/\alpha(t)} + \frac{b(t) \alpha'(t)}{\alpha^2(t)} i^{-1/ \alpha(t)} (\ln i).$$
% 
 We wish to prove that:
\begin{displaymath}
C_{\alpha(t)}^{1/\alpha(t)}\sum_{i=1}^{+\infty} \gamma_i \Gamma_i^{-1/\alpha(t)}\mathbf{1}_{[0,t]}(V_i) = \int_0^t \sum_{i=1}^{+\infty} \gamma_i g'_i(s) \mathbf{1}_{[0,s[}(V_i) ds + \sum_{i=1}^{+\infty}\gamma_i C_{\alpha(V_i)}^{1/\alpha(V_i)} \Gamma_i^{-1/\alpha(V_i)}\mathbf{1}_{[0,t]}(V_i),
\end{displaymath}
where $g_i(t) = C_{\alpha(t)}^{1/\alpha(t)}\Gamma_i^{-1/\alpha(t)}.$

We first prove a lemma about the uniform convergence of the series under consideration:

\begin{lem}\label{LemUnifConv}
 Let $(U_i^{(k)})_{i \in \bbbn}$, $k \in \{1,2\}$, two sequences satisfying for all $i \in \bbbn^*$, $U_i^{(1)} = 1$ and $U_i^{(2)} = \ln \Gamma_i$.
 
 Let $$D_N^{(k)}(s) = \sum_{i=1}^{N} \gamma_i U_i^{(k)} \Gamma_i^{-1/\alpha(s)} \mathbf{1}_{[0,s[}(V_i),$$
 and
 $$D^{(k)}(s) = \sum_{i=1}^{+\infty} \gamma_i U_i^{(k)} \Gamma_i^{-1/\alpha(s)} \mathbf{1}_{[0,s[}(V_i). $$
 
 Then, for all $k \in \{1,2\}$, $D_N^{(k)}(s)$ converges uniformly on $[0,1]$ to $D^{(k)}(s)$.
\end{lem}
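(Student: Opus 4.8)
The plan is to recognise that the series cannot converge absolutely in general, and to prove uniform convergence by a probabilistic chaining argument using exactly the tools already deployed in the proof of Theorem \ref{semimgal}. The key preliminary observation is that absolute convergence \emph{fails} when $d\ge 1$: since $\Gamma_i\sim i$ almost surely by the strong law of large numbers and $1/\alpha(s)\ge 1/d>1/2$, the dominating series $\sum_i |U_i^{(k)}|\,\Gamma_i^{-1/\alpha(s)}$ diverges for $d$ close to $2$, so a Weierstrass $M$-test is unavailable and the cancellations carried by the signs $(\gamma_i)$ are indispensable. I would therefore condition on $\mathcal G=\sigma(\Gamma_i,V_i:i\ge 1)$, so that conditionally $D_N^{(k)}(s)$ is a sum $\sum_{i\le N}\gamma_i c_i(s)$ of independent symmetric terms with $\mathcal G$-measurable coefficients $c_i(s)=U_i^{(k)}\Gamma_i^{-1/\alpha(s)}\mathbf{1}_{[0,s[}(V_i)$.

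First I would record pointwise convergence, which is immediate: for fixed $s$ the conditional variance $\sum_i (U_i^{(k)})^2\Gamma_i^{-2/\alpha(s)}\mathbf{1}_{[0,s[}(V_i)$ is finite almost surely because $\Gamma_i^{-2/\alpha(s)}\sim i^{-2/\alpha(s)}$ with $2/\alpha(s)\ge 2/d>1$, the extra factor $(\ln\Gamma_i)^2$ present when $k=2$ being harmless. Hence $D_N^{(k)}(s)$ converges (in conditional $L^2$, so almost surely) for every fixed $s$, and the real content is to upgrade this to convergence uniform in $s$. For that I would fix $p\in(d,2)$ with $p\ge 1$, a grid $0=s_0<s_1<\dots<s_m=1$ of mesh $\delta$, and writing $R_N(s):=D^{(k)}(s)-D_N^{(k)}(s)$, estimate
\[
\sup_{s\in[0,1]}|R_N(s)|\le \max_{0\le j\le m}|R_N(s_j)|+\max_{1\le j\le m}\ \sup_{s\in[s_{j-1},s_j]}|R_N(s)-R_N(s_{j-1})|.
\]

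The first term is handled by a union bound over the finitely many grid points together with the von Bahr--Esseen inequality (Theorem 2 of \cite{BE}, valid since $p<2$), which gives $\mathbb{E}\big[|R_N(s_j)|^p\mid\mathcal G\big]\le 2\sum_{i>N}|c_i(s_j)|^p$, the right-hand side being the tail of a series that converges because $p>d$. For the oscillation term I would split $R_N(s)-R_N(s_{j-1})$ exactly as in the proof of Theorem \ref{semimgal}: a smooth-modulation part over the indices with $V_i<s_{j-1}$, and a jump part over the newly activated indices $s_{j-1}\le V_i<s$. The modulation part is treated by the mean value theorem applied to $u\mapsto\Gamma_i^{-1/\alpha(u)}$, producing a factor $(s-s_{j-1})\le\delta$ together with $\log\Gamma_i$ and $\Gamma_i^{-1/\alpha(w_i)}$; the \cite{BE} inequality then bounds its conditional $p$-th moment by $\delta^p$ times a convergent tail (the logarithmic factors being absorbed because $p>d$ strictly, for both $k=1$ and $k=2$).

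The main obstacle is the jump part, which forbids any appeal to equicontinuity in $C[0,1]$: the limit $D^{(k)}$ is genuinely discontinuous, with a jump at each $V_i$. The feature I would exploit is that the jumps decay, $\sup_{i>N}|U_i^{(k)}|\,\Gamma_i^{-1/d}\to 0$ almost surely, while the aggregate $p$-th moment of all jump contributions across the grid is again dominated by the tail $\sum_{i>N}|U_i^{(k)}|^p(\Gamma_i^{-1/c}+\Gamma_i^{-1/d})^p$, which is summable precisely because $p>d$. Combining the three estimates and applying Borel--Cantelli along a sequence of refining grids whose mesh $\delta$ shrinks slowly enough relative to $N$ (a L\'evy--Ottaviani maximal inequality can be used to make the passage from fixed endpoints to the supremum over the summation index robust) yields $\sup_s|R_N(s)|\to 0$ almost surely, i.e. a uniform Cauchy criterion identifying the limit as $D^{(k)}$. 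The argument is identical for $k=1$ and $k=2$, the only difference being the harmless extra logarithmic factor, so both series converge uniformly on $[0,1]$ as claimed.
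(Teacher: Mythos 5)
Your overall architecture (condition on everything but the signs, discretize $[0,1]$, von Bahr--Esseen at the grid points, mean value theorem plus a maximal inequality for the oscillations) is reasonable, and several pieces are sound: the pointwise convergence, the grid-point bound, and the L\'evy-inequality treatment of the jump part for a \emph{frozen} value of $\alpha$ all go through. But there is a genuine gap at the modulation step, and it occurs precisely in the only nontrivial regime $d\ge 1$ (for $d<1$ the series converges absolutely and the whole apparatus is unnecessary, as the paper notes). After the mean value theorem, the oscillation of the tail over a cell $[s_{j-1},s_j]$ reads
\[
\sup_{s\in[s_{j-1},s_j]}\Big|\sum_{i>N,\,V_i<s_{j-1}}\gamma_i\,U_i^{(k)}\,(s-s_{j-1})\,\frac{\alpha'(w_i(s))}{\alpha^2(w_i(s))}\,(\log\Gamma_i)\,\Gamma_i^{-1/\alpha(w_i(s))}\Big|,
\]
where the intermediate points $w_i(s)$ vary with $s$. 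The von Bahr--Esseen inequality bounds the conditional $p$-th moment of such a Rademacher sum only for a \emph{fixed} choice of coefficients --- this is exactly why it suffices in the proof of Theorem \ref{semimgal}, where the endpoints $s_k,t_k$ are finitely many and fixed --- and it says nothing about the supremum over the continuum of $s$. The only way your write-up extracts ``$\delta^p$ times a convergent tail'' is by first passing absolute values inside the sum, but the resulting series $\sum_{i>N}|U_i^{(k)}|\,|\log\Gamma_i|\,(\Gamma_i^{-1/c}+\Gamma_i^{-1/d})$ is a.s.\ divergent as soon as $d\ge1$ --- the very observation you open with. The same circularity appears if you instead bound the oscillation by $\delta\sup_u|g'(u)|$ with $g'$ the termwise-differentiated series: uniform control of that series is exactly the $k=2$ instance of the lemma (with one more logarithm), so nothing has been gained.

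The paper's proof is built to avoid precisely this trap. It first replaces $\Gamma_i$ by $i$ (the difference series is shown to converge absolutely and uniformly, using the a.s.\ convergence of $\sum_i\frac{\ln i}{i^{1/d}}\left|\frac{\Gamma_i}{i}-1\right|$), and then, for $d\ge 1$, works on dyadic blocks $i\in[2^{j},2^{j+1})$, approximating $\alpha$ by a step function $\alpha_j$ taking $[2^{j\delta}]$ values with $\delta>\frac12+\frac1d-\frac1{\hat d}$. The point is that the $\alpha$-discretization error is then an absolute bound over a \emph{finite} block, which the mesh $[2^{j\delta}]^{-1}$ is tuned to beat, while the supremum over $s$ of the discretized block becomes a maximum over finitely many values of $\alpha_0$ and finitely many partial-sum lengths (ordered by the $V_i$), amenable to a union bound combined with subgaussian concentration for Rademacher sums (Lemma 1.5 of \cite{LTal}). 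To salvage your scheme you would have to couple the fineness of your grid in $s$ (equivalently, of the discretization of $\alpha$) to a blocking in $i$, at which point you have essentially reconstructed the paper's argument.
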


{\it Proof of Lemma \ref{LemUnifConv}:}

We introduce the series $E_N^{(k)}(s) = \sum_{i=1}^{N}\limits \gamma_i W_i^{(k)} i^{-1/\alpha(s)} \mathbf{1}_{[0,s[}(V_i)$ and 
 $E^{(k)}(s) = \sum_{i=1}^{+\infty}\limits \gamma_i W_i^{(k)} i^{-1/\alpha(s)} \mathbf{1}_{[0,s[}(V_i),$
 where $W_i^{(1)} = U_i^{(1)}$ and $W_i^{(2)} = \ln (i)$.
 
 We first prove that $E_N^{(k)}(s)$ converges uniformly on $[0,1]$ to $E^{(k)}(s)$.
 
  \noindent {\bf Case $d<1$:}
  
  \noindent Fix $1> \hat{d} > d$ and $i_0 \in \mathbb{N}$ such that, for $i \geq i_0$, $|W_i^{(k)}| \leq \ln(i)$ and $\frac{\ln(i)}{i^{1/ d}} \leq \frac{1}{i^{1/ \hat{d}}}$.
 Then, for all $s \in [0,1]$ and $q \geq p$,
 \begin{eqnarray*}
\left| \sum_{i=p}^{q} \gamma_i W_i^{(k)} i^{-1/ \alpha(s)} \mathbf{1}_{[0,s[}(V_i)\right| & \leq & \sum_{i=p}^{q} \frac{|W_i^{(k)}|}{i^{1/ d}} \\
 & \leq & \sum_{i=p}^{q} \frac{1}{i^{1/ \hat{d}}}\\
 & \leq & \sum_{i=p}^{+\infty}  \frac{1}{i^{1/ \hat{d}}}\\
 & \leq & p^{1-1/ \hat{d}}.
 \end{eqnarray*}
 The uniform Cauchy criterion implies that $E_N^{(k)}(s)$ converges uniformly on $[0,1]$ to $E^{(k)}(s)$.
 
 \noindent {\bf Case $d\geq 1$:}
 
 \noindent Set $$E_{\alpha}^j(s) = \sum_{i=2^{j}}^{2^{j+1}-1} \gamma_i W_i^{(k)} i^{-1/ \alpha(s)} \mathbf{1}_{[0,s[}(V_i),$$
 and $b(j) = \frac{j(j+1)\ln 2}{2^{j/d}} \sqrt{2^j}.$
Consider 
 
\begin{eqnarray*}
E(j) &=& \left\{ \sup_{s \in [0,1]} |E_{\alpha}^j(s)| \leq \frac{j(j+1)\ln 2}{2^{j/d}} \sqrt{2^j}\right\}\\
& = &\left\{ \sup_{s \in [0,1]} |E_{\alpha}^j(s)| \leq b(j)\right\}.\\
\end{eqnarray*}
Fix $\hat{d} > d \geq 1$ and $i_0 \in \mathbb{N}$ such that, for $i \geq i_0$, $\frac{(\ln i)^2}{i^{1/ d}} \leq \frac{1}{i^{1/ \hat{d}}}.$

 Let $\delta > \frac{1}{2} + \frac{1}{d} - \frac{1}{\hat{d}}.$
 Define the step function $\alpha_j(s) = \sum_{k=0}^{[2^{j \delta}]-1}\limits \alpha(\frac{k}{[2^{j \delta}]}) \mathbf{1}_{[ \frac{k}{[2^{j \delta}]}, \frac{k+1}{[2^{j \delta}]}[}(s).$ 
For all $s \in [0,1]$,
\begin{eqnarray*}
|\alpha_j(s) - \alpha(s)| & = & \sum_{k=0}^{[2^{j \delta}]-1} | \alpha(\frac{k}{[2^{j \delta}]}) - \alpha(s) | \mathbf{1}_{[ \frac{k}{[2^{j \delta}]}, \frac{k+1}{[2^{j \delta}]}[}(s) \\
& \leq & \frac{\sup_{t \in [0,1]}\limits |\alpha'(t)|}{[2^{j \delta}]},\\
\end{eqnarray*}
and thus $$\sup_{s \in [0,1]} |\alpha_j(s) - \alpha(s)| \leq \frac{A}{[2^{j \delta}]},$$
with $A=\sup_{t \in [0,1]}\limits |\alpha'(t)|.$

Set 
 $$E_{\alpha_j}^j(s) = \sum_{i=2^{j}}^{2^{j+1}-1}\limits \gamma_i W_i^{(k)}i^{-1/\alpha_j(s)} \mathbf{1}_{[0,s[}(V_i).$$
 Then,
 \begin{eqnarray*}
 E_{\alpha}^j(s) - E_{\alpha_j}^j(s) & = &  \sum_{i=2^{j}}^{2^{j+1}-1} \gamma_i W_i^{(k)} \left(i^{-1/\alpha(s)} - i^{-1/\alpha_j(s)} \right) \mathbf{1}_{[0,s[}(V_i). \\
 \end{eqnarray*}
 The finite increments theorem implies that there exists $\alpha^i(s) \in [\alpha_j(s), \alpha(s)]$ (or in $[\alpha(s), \alpha_j(s)]$) such that
 \begin{eqnarray*}
E_{\alpha}^j(s) - E_{\alpha_j}^j(s) & = & (\alpha(s) - \alpha_j(s) ) \sum_{i=2^{j}}^{2^{j+1}-1} \gamma_i \frac{\ln i}{(\alpha^i(s))^2} W_i^{(k)} i^{-1/\alpha^i(s)} \mathbf{1}_{[0,s[}(V_i), \\
\end{eqnarray*}
and thus
 \begin{eqnarray*}
 |E_{\alpha}^j(s) - E_{\alpha_j}^j(s) | & \leq & |\alpha(s) - \alpha_j(s)|  \sum_{i=2^{j}}^{2^{j+1}-1} \frac{(\ln i)^2 }{d^2} i^{-1/d}\\
  & \leq & \frac{A}{[2^{j \delta}] d^2} \sum_{i=2^{j}}^{2^{j+1}-1}\frac{(\ln i)^2}{i^{1/d}}.\\
  \end{eqnarray*}

 There exists $J_0 \in \mathbb{N}$ such that, for all $j \geq J_0$,
 \begin{eqnarray*}
 \sup_{s \in [0,1]} |E_{\alpha}^j(s) - E_{\alpha_j}^j(s) | & \leq & \frac{b(j)}{2}.\\
 \end{eqnarray*} 
 Then, for $j \geq J_0$, 
   \begin{eqnarray*}
   \P \left( \overline{E(j)} \right) & = & \P \left( \sup_{s \in [0,1]} |E_{\alpha}^j(s)| > b(j) \right) \\
   & \leq & \P \left( \frac{b(j)}{2} + \sup_{s \in [0,1]} |E_{\alpha_j}^j(s)| > b(j) \right)\\
   & =  &\P \left(\sup_{s \in [0,1]} |E_{\alpha_j}^j(s)| > \frac{b(j)}{2} \right) .\\
   \end{eqnarray*}

 Denote by $\mathcal{A}^j$ the set of possible values of $\alpha_j$. Then $Card(\mathcal{A}^j)=[2^{j \delta}].$  
For each random drawing of the $(V_i)_i$, $E_{\alpha_j}^j(s)$ is composed of a sum of $n$ terms of the form $\gamma_{l_i} W_{l_i}^{(k)} l_i^{-1/ \alpha_0}$ where $\alpha_0 \in \mathcal{A}^j$. There are $2^j$ possible values for $n$, and $[2^{j \delta}]$ possible values for $\alpha_0$. One thus has
the following estimates:
 \begin{eqnarray*}
\hspace{-1cm} \P \left( \overline{E(j)} \right)  & \leq & \P \left(\sup_{s \in [0,1]} |E_{\alpha_j}^j(s)| > \frac{b(j)}{2} \right) \\
 & \leq & \P \left( \bigcup_{l_1,...,l_{2^j} \in \llbracket 2^j,2^{j+1}-1 \rrbracket} \{ V_{l_1} < V_{l_2} < ... < V_{l_{2^j}} \} \cap \{ \sup_{ \alpha_0 \in \mathcal{A}^j} \sup_{1\leq m \leq 2^j} |\sum_{i=1}^{m} \gamma_{l_i} W_{l_i}^{(k)} l_i^{-1/ \alpha_0} | > \frac{b(j)}{2}\} \right)\\
 & \leq & \sum_{l_1,...,l_{2^j} \in \llbracket 2^j,2^{j+1}-1 \rrbracket}\P \left(  \{ V_{l_1} < V_{l_2} < ... < V_{l_{2^j}} \} \cap \{ \sup_{ \alpha_0 \in \mathcal{A}^j} \sup_{1\leq m \leq 2^j} |\sum_{i=1}^{m} \gamma_{l_i} W_{l_i}^{(k)} l_i^{-1/ \alpha_0} | > \frac{b(j)}{2}\} \right) \\
 & \leq & \sum_{l_1,...,l_{2^j} \in \llbracket 2^j,2^{j+1}-1 \rrbracket} \P \left(  V_{l_1} < V_{l_2} < ... < V_{l_{2^j}} \right) \P \left( \sup_{ \alpha_0 \in \mathcal{A}^j} \sup_{1\leq m \leq 2^j} |\sum_{i=1}^{m} \gamma_{l_i} W_{l_i}^{(k)} l_i^{-1/ \alpha_0} | > \frac{b(j)}{2} \right)\\
 & \leq & \sum_{l_1,...,l_{2^j} \in \llbracket 2^j,2^{j+1}-1 \rrbracket} \frac{1}{(2^j)!} \P \left( \cup_{ \alpha_0 \in \mathcal{A}^j} \cup_{m =1}^{2^j} |\sum_{i=1}^{m} \gamma_{l_i} W_{l_i}^{(k)} l_i^{-1/ \alpha_0} | > \frac{b(j)}{2} \right)\\
 & \leq &  \frac{1}{(2^j)!}\sum_{l_1,...,l_{2^j} \in \llbracket 2^j,2^{j+1}-1 \rrbracket}  \sum_{ \alpha_0 \in \mathcal{A}^j} \sum_{m =1}^{2^j} \P \left(|\sum_{i=1}^{m} \gamma_{l_i} W_{l_i}^{(k)} l_i^{-1/ \alpha_0} | > \frac{b(j)}{2} \right) \\
 & \leq &  \frac{1}{(2^j)!}\sum_{l_1,...,l_{2^j} \in \llbracket 2^j,2^{j+1}-1 \rrbracket}  \sum_{ \alpha_0 \in \mathcal{A}^j} \sum_{m =1}^{2^j} \P \left(|\sum_{i=1}^{m} \gamma_{l_i} \frac{W_{l_i}^{(k)}}{(j+1)\ln 2} \frac{2^{j/d}}{l_i^{1/ \alpha_0}} | > \frac{j}{2} \sqrt{2^j}\right) \\
 & \leq &  \frac{1}{(2^j)!}\sum_{l_1,...,l_{2^j} \in \llbracket 2^j,2^{j+1}-1 \rrbracket}  \sum_{ \alpha_0 \in \mathcal{A}^j} \sum_{m =1}^{2^j} 2e^{-\frac{j^2}{8}}\\
 & \leq & 2.2^j[2^{j \delta}]e^{-\frac{j^2}{8}},\\
 \end{eqnarray*} 
 where Lemma 1.5 of \cite{LTal} was used in the end.
 As a consequence, $$\P \left( \liminf_j E(j) \right) =1.$$
  
Now let $p \in \mathbb{N}$, and set $j_p = [\frac{\ln p}{\ln 2} ]$. Define  for $m \in \llbracket 2^{j_p}, 2^{j_p+1}-1 \rrbracket$
 $$E_{\alpha}^{m,p}(s) = \sum_{i=m}^{2^{j_p+1}-1} \gamma_i W_i^{(k)} i^{-1/\alpha(s)} \mathbf{1}_{[0,s[}(V_i),$$ 
 and 
 $$E_{\alpha_{j_p}}^{m,p}(s) =  \sum_{i=m}^{2^{j_p+1}-1} \gamma_i W_i^{(k)} i^{-1/\alpha_{j_p}(s)} \mathbf{1}_{[0,s[}(V_i).$$
 As above,
 $$\sup_{s \in [0,1]} |\alpha_{j_p}(s) - \alpha(s)| \leq \frac{A}{[2^{j_p \delta}]} $$ 
 and
 \begin{eqnarray*}
 |E_{\alpha}^{m,p}(s) - E_{\alpha_{j_p}}^{m,p}(s) | & \leq & \frac{A}{[2^{j_p \delta}] d^2} \sum_{i=m}^{2^{j_p+1}-1}\frac{1}{i^{1/ \hat{d}}}\\
 & \leq & \frac{A}{[2^{j_p \delta}] d^2} \sum_{i=2^{j_p}}^{2^{j_p+1}-1}\frac{1}{i^{1/ \hat{d}}}\\
 & \leq &  \frac{A}{[2^{j_p \delta}] d^2} \frac{2^{j_p}}{2^{j_p/ \hat{d}}}.\\
 \end{eqnarray*}
 Fix $p_0 \in \mathbb{N}$ such that for all $p \geq p_0$,
 \begin{eqnarray*}
 \sup_{m \in \llbracket 2^{j_p}, 2^{j_p+1}-1 \rrbracket } \sup_{s \in [0,1]} |E_{\alpha}^{m,p}(s) - E_{\alpha_{j_p}}^{m,p}(s) | & \leq & \frac{b(j_p)}{2},\\
 \end{eqnarray*} 
 and consider 
 \begin{eqnarray*}
 E(p) &=& \left\{ \sup_{m \in \llbracket 2^{j_p}, 2^{j_p+1}-1 \rrbracket } \sup_{s \in [0,1]} |E_{\alpha}^{m,p}(s)| \leq b(j_p)\right\}.\\
 \end{eqnarray*}
 For $p \geq p_0$,
 \begin{eqnarray*}
 \P \left( \overline{E(p)} \right) & \leq & \P \left( \sup_{m \in \llbracket 2^{j_p}, 2^{j_p+1}-1 \rrbracket } \sup_{s \in [0,1]} |E_{\alpha_{j_p}}^{m,p}(s)| > \frac{b(j_p)}{2}\right)\\
 & \leq & \sum_{m \in \llbracket 2^{j_p}, 2^{j_p+1}-1 \rrbracket } \P \left( \sup_{s \in [0,1]} |E_{\alpha_{j_p}}^{m,p}(s)| > \frac{b(j_p)}{2}\right)\\
 & \leq & \sum_{m \in \llbracket 2^{j_p}, 2^{j_p+1}-1 \rrbracket } 2.2^{j_p}[2^{j_p \delta}]e^{-\frac{j_p^2}{8}}\\
 & \leq & 2.2^{2j_p}[2^{j_p \delta}]e^{-\frac{j_p^2}{8}}.\\
 \end{eqnarray*}
 As a consequence,
 $$\P \left( \liminf_p E(p) \right) =1.$$
 Reasoning along the same lines and setting 
 $$E_{\alpha}^{m,q}(s) = \sum_{i=2^{j_q}}^{m} \gamma_i W_i^{(k)} i^{-1/\alpha(s)} \mathbf{1}_{[0,s[}(V_i),$$ 
 and 
 $$E(q) = \left\{ \sup_{m \in \llbracket 2^{j_q}, 2^{j_q+1}-1 \rrbracket } \sup_{s \in [0,1]} |E_{\alpha}^{m,q}(s)| \leq b(j_q)\right\},$$
 one gets $$\P \left( \liminf_q E(q) \right) =1.$$
 
 Finally, let $p,q \in \mathbb{N}$, with $q \geq p$, and $p$ large enough. For all $s \in [0,1]$,
 \begin{eqnarray*}
 \left| E_q(s) - E_p(s) \right| & = & \left| \sum_{i=p}^{2^{j_p+1}-1} \gamma_i W_i^{(k)} i^{-1/\alpha(s)} \mathbf{1}_{[0,s[}(V_i) + \sum_{j=j_p +1}^{j_q -1} \sum_{i=2^j}^{2^{j+1}-1} \gamma_i W_i^{(k)} i^{-1/\alpha(s)} \mathbf{1}_{[0,s[}(V_i) \right. \\
& & \left. +\sum_{i=2^{j_q}}^{q} \gamma_i W_i^{(k)} i^{-1/\alpha(s)} \mathbf{1}_{[0,s[}(V_i) \right|\\
  & \leq & b(j_p) + \sum_{j=j_p +1}^{j_q -1} b(j) + b(j_q)\\
 & \leq & 2 b(j_p) + \sum_{j=j_p}^{+ \infty} b(j) .\\
 \end{eqnarray*}
 Again, the uniform Cauchy criterion implies that $E_N(s)$ converges uniformly on $[0,1]$ to $E(s)$.
 
\noindent {\bf Convergence of the difference term:}

\noindent Let $q \geq p$ and denote $R_{p,q}(s) = \sum_{i=p}^{q}\limits \gamma_i (U_i^{(k)} \Gamma_i^{-1/\alpha(s)} - W_i^{(k)} i^{-1/ \alpha(s)} )\mathbf{1}_{[0,s[}(V_i).$

\noindent Fix $p_0$ such that, for all $p \geq p_0$, 
$$R_{p,q}(s) =  \sum_{i=p}^{q}\limits \gamma_i (U_i^{(k)} \Gamma_i^{-1/\alpha(s)} - W_i^{(k)} i^{-1/ \alpha(s)} )\mathbf{1}_{[0,s[}(V_i)\mathbf{1}_{\frac{1}{2} \leq \frac{\Gamma_i}{i} \leq 2}.$$
One computes: 
$$ | R_{p,q}(s) | \leq  \sum_{i=p}^{q} |U_i^{(k)} \Gamma_i^{-1/\alpha(s)} - W_i^{(k)} i^{-1/ \alpha(s)}|\mathbf{1}_{\frac{1}{2} \leq \frac{\Gamma_i}{i} \leq 2}.$$

\noindent \underline{Case $k=1$:}

For $i \geq 1$, $U_i^{(1)}=W_i^{(1)}$. We then obtain
\begin{eqnarray*}
| R_{p,q}(s) | &\leq & \sum_{i=p}^{q} |i^{-1/ \alpha(s)}-\Gamma_i^{-1/ \alpha(s)} |\mathbf{1}_{\frac{1}{2} \leq \frac{\Gamma_i}{i} \leq 2}\\
& \leq & \sum_{i=p}^{q} \frac{1}{i^{1/d}} |1-(\frac{\Gamma_i}{i})^{-1/ \alpha(s)} |\mathbf{1}_{\frac{1}{2} \leq \frac{\Gamma_i}{i} \leq 2}.
\end{eqnarray*}
There exists a positive constant $K_{c,d}$ which depends only on $c$ and $d$ such that
$$| R_{p,q}(s) | \leq  K_{c,d} \sum_{i=p}^{q} \frac{1}{i^{1/d}} \left| \frac{\Gamma_i}{i} -1 \right |.$$

\noindent \underline{Case $k=2$:}

One computes:

\begin{eqnarray*}
| R_{p,q}(s) | &\leq & \sum_{i=p}^{q} | \ln(i) i^{-1/ \alpha(s)}- \ln(\Gamma_i) \Gamma_i^{-1/ \alpha(s)} |\mathbf{1}_{\frac{1}{2} \leq \frac{\Gamma_i}{i} \leq 2}\\
&\leq & \sum_{i=p}^{q} \frac{\ln(i)}{i^{1/d}} \left| 1 - \frac{\ln(\Gamma_i)}{\ln(i)} (\frac{\Gamma_i}{i} )^{-1/ \alpha(s)} \right|\mathbf{1}_{\frac{1}{2} \leq \frac{\Gamma_i}{i} \leq 2}\\
& \leq & K_{c,d} \sum_{i=p}^{q} \frac{\ln(i)}{i^{1/d}} \left| \frac{\Gamma_i}{i} -1 \right|,
\end{eqnarray*}
where $K_{c,d}$ is again a positive constant.

Finally, in the two cases, $$| R_{p,q}(s) | \leq  K_{c,d} \sum_{i=p}^{q} \frac{\ln(i)}{i^{1/d}} \left| \frac{\Gamma_i}{i} -1 \right|.$$

 The series $\sum_i\limits \frac{\ln i}{i^{1/d}} \left| \frac{\Gamma_i}{i} -1 \right |$ converges almost surely. The uniform Cauchy criterion
 thus applies to the effect that $D_N^{(k)}(s)-E_N^{(k)}(s)$ converges uniformly. As a consequence, $D_N^{(k)}(s)$ converges uniformly to $D^{(k)}(s)$. $\Box$

 We have shown that, almost surely,
 $$\lim_{N \rightarrow +\infty }\int_0^t \sum_{i=1}^{N} \gamma_i g'_i(s) \mathbf{1}_{[0,s[}(V_i) ds = \int_0^t \sum_{i=1}^{+\infty} \gamma_i g'_i(s) \mathbf{1}_{[0,s[}(V_i) ds .$$ 

Besides,

 \begin{eqnarray*}
\int_0^t \sum_{i=1}^{N} \gamma_i g'_i(s) \mathbf{1}_{[0,s[}(V_i) ds & = & \sum_{i=1}^{N} \gamma_i \int_0^t g'_i(s) \mathbf{1}_{[0,s[}(V_i) ds \\
 & = & \sum_{i=1}^{N} \gamma_i \left( \int_{V_i}^t g'_i(s) ds \right) \mathbf{1}_{[0,t]}(V_i)\\
 & = & \sum_{i=1}^{N} \gamma_i \left( g_i(t) - g_i(V_i) \right) \mathbf{1}_{[0,t]}(V_i)\\
 & = & \sum_{i=1}^{N} \gamma_i g_i(t)\mathbf{1}_{[0,t]}(V_i) - \sum_{i=1}^{N} \gamma_i g_i(V_i)\mathbf{1}_{[0,t]}(V_i)
 \end{eqnarray*}
 which implies finally that
$$\lim_{N \rightarrow +\infty }\int_0^t \sum_{i=1}^{N} \gamma_i g'_i(s) \mathbf{1}_{[0,s[}(V_i) ds = \sum_{i=1}^{+\infty} \gamma_i g_i(t)\mathbf{1}_{[0,t]}(V_i) - \sum_{i=1}^{+\infty} \gamma_i g_i(V_i)\mathbf{1}_{[0,t]}(V_i).$$
 This is \eqref{decomp}. 
 
 That $A$ has finite variation follows from the fact that it is an absolutely continuous function.
 \end{proof}

\section*{Acknowledgments}
Support from SMABTP is gratefully acknowledged.

\bibliographystyle{plain}

 \end{document}